\documentclass{article}
\usepackage{a4wide}
\usepackage{amsfonts}
\usepackage[margin=2cm]{geometry}
\usepackage{graphics, amsmath,enumitem, comment, color}
\usepackage{graphicx}
\usepackage{epsfig}
\usepackage{amssymb}
\usepackage{amsmath,amsthm}
\usepackage{mathrsfs}
\usepackage{colortbl}
\usepackage{tikz}
\usetikzlibrary{positioning,arrows}
\usetikzlibrary{3d}
\usetikzlibrary[patterns]
\usepackage{pgflibraryarrows}
\usepackage{pgflibrarysnakes}
\usetikzlibrary{snakes}
\usepackage{arydshln}
\usepackage{graphicx}
\usepackage{setspace}
\usepackage{eucal}

\newif\ifcolorcomments

\colorcommentstrue
\def\bc{\begin{center}}
\def\ec{\end{center}}
\def\be{\begin{equation}}
\def\ee{\end{equation}}

\def\N{\mathbb N}
\def\Z{\mathbb Z}
\def\Q{\mathbb Q}

\def\R{\mathbb R}

\def\Z{\mathbb Z}

\newtheorem{lem}{Lemma}[section]

\newtheorem{pro}[lem]{Proposition}
\newtheorem{thm}[lem]{Theorem}

\newtheorem{con}[lem]{Conjecture}

\newtheorem{cor}[lem]{Corollary}
\newtheorem{rem}[lem]{Remark}
\numberwithin{equation}{section}
\allowdisplaybreaks
\usepackage{hyperref}

\title{On the minimum of $\sigma$-Brjuno functions}

\author{Ayreena Bakhtawar
\and
Carlo Carminati
\and
Stefano Marmi
}

\newcommand{\Addresses}{{
  \bigskip
  \footnotesize

A.~Bakhtawar, \textsc{
Institute of Mathematics, Polish Academy of Sciences, ul.  Sniadeckich 8, 00-656
Warszawa, Poland
}\par\nopagebreak
  \textit{E-mail address}, A.~Bakhtawar: \texttt{ abakhtawar@impan.pl; ayreena.bakhtawar@gmail.com}

  \medskip

  C.~Carminati, \textsc{Dipartimento di Matematica, University di Pisa, Largo Bruno Pontecorvo 5, 56127
Pisa, Italy}\par\nopagebreak
  \textit{E-mail address}, C.~Carminati: \texttt{carlo.carminati@unipi.it}

  \medskip

  S.~Marmi, \textsc{Scuola Normale Superiore, Piazza dei Cavalieri 7, 56126 Pisa, Italy}\par\nopagebreak
  \textit{E-mail address}, S.~Marmi: \texttt{stefano.marmi@sns.it}

}
}
\date{}
\begin{document}
\maketitle
\footnotetext{This research is supported by Centro di Ricerca Matematica Ennio de
Giorgi, Scuola Normale Superiore ``Research in Pairs Program'',  and by 
the Project ``
Dynamics and
Information Research Institute – Quantum Information (Teoria dell'Informazione), Quantum
Technologies" 
 within the agreement between UniCredit Bank and Scuola
Normale Superiore. The first author would like to thank Centro di Ricerca Matematica Ennio de
Giorgi, Scuola Normale Superiore, Pisa for their hospitality during various visits.}
\begin{abstract}

$\sigma$-Brjuno functions were introduced in \cite{MaMoYo_06} as an interesting variant of the classical Brjuno function, where one substitutes the $\log$ singularity at $x=0$ with the power law divergence $x^{-1/\sigma},$ $(\sigma>0).$ As in the classical case, $B_{\sigma}$ is a locally unbounded, highly irregular lower semi continuous function; from semi continuity property it easily follows that $B_{\sigma}$
admits a global minimum but to locate it is quite a challenging problem.
We prove that for $\sigma=n \in \mathbb{N}$, the unique global minimum of $B_n$ is achieved at the fixed point $ [0; \overline{n+1}]$.  Furthermore, we prove that these minimizers are locally stable, showing that the point of minimum remains constant for $\sigma$ in a neighborhood of $n$. Finally, we discuss the scaling behavior near these minima and we formulate a  conjecture about the phase transitions for the location of the minimizer as $\sigma$ varies. 

 \end{abstract}

\section{Introduction}

 The problem of linearizing a dynamical system near a fixed point stands as one of the foundational questions in complex dynamics. For holomorphic maps in one complex variable, this reduces to understanding when a germ $f(z)=\lambda z+O(z^2),$  where $\lambda:=f'(0),$ is locally conjugate to its linear part $z\mapsto\lambda z.$ When $|\lambda|\neq 1$ (the hyperbolic case), Koenigs’ theorem in the late 19th century provided a complete answer. The delicate indifferent case $|\lambda|=1,$ where $\lambda=e^{2\pi \iota x}$ and $x$ is irrational, proved far more subtle due to the appearance of small divisors.
 
Siegel’s 1942 \cite{Si_42},  breakthrough result established linearizability under a Diophantine condition on $x,$ a number-theoretic restriction on how well $x$ can be approximated by rationals. Two decades later, Brjuno \cite{Br_71} significantly relaxed this condition by introducing, what is now known as the Brjuno condition: for an irrational $x$ with continued fraction convergents  $\left\{    \frac{p_{j}}{ q_{j} }    \right\}_{j\geq 0},$ the Brjuno condition requires $\sum^{\infty}_{j=0}\frac{\log q_{j+1}}{q_{j}}<\infty.$
This defines the set of Brjuno numbers, a full-Lebesgue measure set, containing  all Diophantine numbers.
Similar results hold for the local conjugacy problem of analytic diffeomorphisms of the circle
\cite{Yo_02}, for problem of linearization of germs of analytic
diffeomorphisms with a fixed point, for 
 some complex area–preserving maps \cite{ChMa_22, Da_94, Mar_90}.
 
 Yoccoz's (see \cite[Appendix 5] {MaMoYo_01}) reformulated Brjuno  condition via the Brjuno function $B:\R\setminus\Q,$ constructed using the Gauss map 
$A:[0,1]\to[0,1]$ (defined in \eqref{Amap})    and satisfying a functional (cocycle) equation under the action of 
the modular group $PGL (2, \Z)$  as follows:  
\begin{equation*}
B(x)= \sum^{\infty}_{j=0} (xA(x)\cdots A^{j-1}(x))\log \left(\frac{1}{A^{j}(x)}\right).
\end{equation*}

Another important consequence of Yoccoz's work is that the Brjuno
function gives the size (modulo $L^{\infty}$ \cite{Yo_95}, and even continuous functions \cite{BC06}) of the domain of stability
around an indifferent fixed point of a quadratic polynomial. Conjecturally, it plays the same
role in many other small divisor problems \cite{MA_00,MaJa_92,  MaYa_02, MaCa08}.

The local properties of $B$ were studied in \cite{BaMa_12}, where the authors showed that the Lebesgue points of the Brjuno function $B$ are exactly the Brjuno numbers and the multifractal analysis of $B$ was carried out in \cite{StMa_18}. Recently, the local minima of $B$ was studied in \cite{BCM}.

A natural  generalization of this framework, proposed by Marmi--Moussa--Yoccoz  \cite{MaMoYo_06}, arises from replacing the logarithmic term with a function exhibiting  singular behavior at $x=0$; particularly interesting choice, as explored in  \cite{LuMar_10,MaMoYo_06}, is to substitute the logarithm with a power-law divergence, leading to the one-parameter family of $\sigma$-Brjuno functions, where $\sigma>0$. 
For $x \in (0,1)\setminus\mathbb{Q}$ with the Gauss map $A(x)=1/x-\lfloor 1/x\rfloor$ generating continued fraction iterates $x_j = A^j(x)$ and products $\beta_j = \prod_{i=0}^j x_i$, the $\sigma$-Brjuno function is defined as (see Figure \ref{mfig} for the graph of the Brjuno function for different values of $\sigma$),
\begin{equation*}\label{aBFun}
B_{\sigma}(x)=\sum^{\infty}_{j=0} \beta_{j-1}(x) x^{-1/\sigma}_{j}=\sum^{\infty}_{j=0} \beta_{j-1}(x)(1/A^{j}(x))^{1/\sigma}.
\end{equation*}
 
 This generalization establishes a precise hierarchy within Diophantine classes. As  detailed in Section 2, condition $B_{\sigma}<\infty$ is intimately related to classical Diophantine classes: the set of $\sigma$-Brjuno numbers contains all Diophantine numbers of exponent $\tau<\sigma$ and is contained in the set of Diophantine numbers of exponent $\sigma.$ Thus, the parameter $\sigma$ provides a continuous refinement of the classical Diophantine exponent $\tau.$

 Beyond this arithmetic chracterization, the function $B_{\sigma}$ exhibits rich variational properties. It is lower semi-continuous as shown in \cite{BCM} (see section 3.4 of \cite{BCM}) and blows up at every rational, $(\lim_{x\to p/q}B_\sigma(x)=+\infty).$  
 
 The lower semi-continuity implies that $B_\sigma$ always admits a minimum on $[0,1]$; as in the clssical case (see \cite{BCM}), it is then natural to ask  where the point of minimum of the function $B_{\sigma}$ is located for different values of $\sigma.$
  
Throughout this article $\N$ will denote the set of positive natural integers;
our investigation provides an exact answer for $\sigma\in\N$, while simultaneously establishing the local rigidity of these minima for nearby real values of the parameter.

\begin{thm}\label{MTV}
Let $\sigma=n \in \N.$ Then the $\sigma$-Brjuno function achieves its unique global minimum at the point 
\begin{equation*}
    \eta_{n+1}:=[0;\overline{n+1}]=\frac{\sqrt{(n+1)^2+4}-(n+1)}{2},
\end{equation*}
that is
\begin{equation*}
    \min_{x\in [0,1]} B_{n}(x)=B_{n}(\eta_{n+1}).
\end{equation*}
\end{thm}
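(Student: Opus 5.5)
We start from the cocycle (functional) equation satisfied by $B_\sigma$ on $(0,1)\setminus\Q$:
\begin{equation*}
B_\sigma(x)=x^{-1/\sigma}+x\,B_\sigma(A(x)).
\end{equation*}
At the fixed point $\eta:=\eta_{n+1}$ we have $A(\eta)=\eta$. The equation then gives $B_n(\eta)=\eta^{-1/n}/(1-\eta)$. This is an explicit value $m_n$, and the goal becomes proving $B_n(x)\ge m_n$ for every $x$, with equality only at $\eta$.

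The plan is a \emph{self-improving lower bound}. Set $m:=\inf B_n$; it is finite and attained, by lower semicontinuity and because $B_n=+\infty$ at rationals. Write $x=1/(a+y)$ with $a=\lfloor 1/x\rfloor\ge1$ and $y=A(x)\in(0,1)$. From $B_n(y)\ge m$ the cocycle equation gives
\begin{equation*}
B_n(x)\ \ge\ \Phi_a(y):=(a+y)^{1/n}+\frac{m}{a+y}.
\end{equation*}
So it suffices to show that the one-variable function $g(t)=t^{1/n}+m/t$, for $t=1/x\in(1,\infty)$, has minimum $\ge m$, with equality only at $t=1/\eta=n+1+\eta$.

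The minimizer satisfies $m=B_n(x^*)\ge g(1/x^*)$, and $g(t^*)=m$ at the optimal $t^*$. One then needs to identify $m$ self-consistently. The function $g$ is convex in $t$, with critical point $t_c$ given by $t_c^{1+1/n}=nm$. This simple approach only yields $m\ge\min g$, which is a fixed-point inequality for $m$ and not immediately sharp. To close the argument, I would iterate twice or use the full expansion: the minimizer $x^*$ satisfies $B_n(x^*)=m$, so equality must hold at every step. That forces $A(x^*)$ to also be a minimizer, and also forces $1/x^*$ to minimize $g$ over the admissible set. Since $B_n$ is $+\infty$ at rationals, $A(x^*)$ is irrational, so the minimizer set $M$ is $A$-invariant.

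Next I would show that all minimizers have the same partial quotient $a$ and that $A$ maps $M$ into $M$. Since $B_n(x)=g(1/x)$ on $M$ and $g$ is strictly convex, $M$ contains at most two points. Those two points would form a period-2 orbit of $A$, or a single fixed point $[0;\overline{a}]$. The fixed-point case reduces to comparing the explicit values $F(a):=\eta_a^{-1/n}/(1-\eta_a)$ over integers $a$, and showing that $a=n+1$ minimizes them. For the two-cycle case $[0;\overline{a,b}]$ with $a\ne b$, one uses the equality $g(1/x_1)=g(1/x_2)=m$ and checks that this is incompatible with the orbit relations.

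The main obstacle is this discrete comparison: showing $F(a)>F(n+1)$ for $a\ne n+1$ and excluding genuine two-cycles. Here the integrality of $\sigma=n$ must enter. I would try to prove it using the critical-point relation $t_c^{(n+1)/n}=nm$. Plugging in $t=n+1+\eta$ and $m=F(n+1)$, the identity $\eta(n+1+\eta)=1$ should make $t_c$ lie exactly between consecutive admissible integer ranges, so that the integer branch $a=n+1$ is optimal. Strictness would come from the strict convexity of $g$.
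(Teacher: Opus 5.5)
There is a genuine gap at the core of your reduction. The claim you reduce to, that $t\mapsto t^{1/n}+m/t$ has minimum $\ge m$ on $(1,\infty)$, is false. Its minimum is $(1+\frac1n)(nm)^{1/(n+1)}$, which is $\ge m$ iff $m\le b^*:=(n+1)^{1+1/n}/n$. So your inequality $m\ge\min_t(t^{1/n}+m/t)$ says only that $m\ge b^*$. But $m>b^*$ strictly: once $m\ge b^*$ you get $B_n(x)\ge x^{-1/n}+b^*x$, and the minimum $b^*$ of the right-hand side is attained only at the rational point $1/(n+1)$. For $n=1$ the target value is $B_1(\sqrt2-1)=2+\frac32\sqrt2\approx 4.121$, while $\min_t(t+m/t)=2\sqrt m\approx 4.060$. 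The one-step bound loses near the left end of each cylinder, where $A(x)$ is small and the estimate $B_n(A(x))\ge m$ is very wasteful.

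For the same reason, ``equality must hold at every step'' has no justification. From $m=B_n(x^*)=x^{*-1/n}+x^*B_n(A(x^*))$ you only get $m\ge h(x^*)$ with $h(x):=x^{-1/n}/(1-x)$. Moreover, $A(x^*)$ is a minimizer iff $m=h(x^*)$, and nothing forces this. $A$-invariance of the minimizing set is a consequence of the theorem, not an available input. So the reduction to a fixed point or a $2$-cycle collapses, and with it the comparison of $F(a)$ over integers, which in any case only compares $B_n$ on periodic points.

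What you are missing is an a priori localization of the minimizer, together with a use of self-consistency that stays at the level of inequalities. The paper first excludes $[\frac{1}{n+1},1]$. It uses the two-step cylinder bound $B_\sigma(x)\ge g_k(x)=x^{-1/\sigma}+x(\frac1x-k)^{-1/\sigma}+(1-kx)b^*$ on $(\frac1{k+1},\frac1k)$, together with $g(x)=x^{-1/\sigma}+b^*x$. This gives $\min_{[1/(n+1),1]}B_n\ge g(\xi_n)$ with $\xi_n=1/(n+\frac1n)$. It then proves $g(\xi_n)>B_n(\eta_{n+1})$, partly via certified numerics; this is the only place where $\sigma=n$ is used.

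Next it compares the minimizer $r\in(0,\frac1{n+1})$ with $\eta=\eta_{n+1}$ through $B^{(K)}_\sigma(x)/(1-\beta_K(x))$. This quantity bounds $C$ from below at $r$ and equals $B_\sigma(\eta)$ at the fixed point.
\begin{itemize}
\item For $K=0$ this is precisely your inequality $m\ge h(r)$, combined with $m\le B_n(\eta)=h(\eta)$. Since $h$ is decreasing on $(0,\frac{1}{n+1})$, which contains $\eta$, this gives $r\ge\eta$.
\item For $K=1$, on the cylinder $(\frac1{n+2},\frac1{n+1})$ the function $f(x)=\frac1{n+1}\bigl(x^{-1-1/\sigma}+(\frac1x-n-1)^{-1/\sigma}\bigr)$ is strictly convex. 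It has $f'(\eta)>0$ because $1/\eta=n+1+\eta>\sigma+1$, and this gives $r\le\eta$.
\end{itemize}
Your inequality $m\ge h(x^*)$ is the right ingredient, but it has to be used as an inequality, not upgraded to an equality.
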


The proof of this result will be carried in two steps which are enclosed in the propositions below:  an {\it a priori} estimate which shows that the global minimum must fall in the interval $[0,\frac{1}{n+1}]$ (Proposition \ref{Reg}); and an argument (analogous to that used by \cite{BaMa_20}) showing that if the minimum of $B_{\sigma}$ belongs to $[0,\frac{1}{n+1}]$ then it must coincide with $\eta_{n+1}$ (Proposition \ref{MT}). 

\begin{pro}\label{Reg}
Let $\sigma=n\in\N.$ Then  
\begin{equation}\label{war1}
\min_{x\in[\frac{1}{n},1]}B_{\sigma}(x)>B_{\sigma}(\eta_{n+1}).
\end{equation}
\end{pro}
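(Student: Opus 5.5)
Throughout, $x_j=A^j(x)$, so that $B_n(x)=\sum_{j\ge0}\beta_{j-1}(x)\,x_j^{-1/n}$ with $\beta_{-1}=1$. The strategy is to bound $B_n(\eta_{n+1})$ from above by an explicit number and $B_n$ on $[\frac1n,1]$ from below by a strictly larger one.

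\textbf{The value at $\eta_{n+1}$.} Since $\eta:=\eta_{n+1}$ is fixed by $A$, we have $x_j=\eta$ and $\beta_{j-1}=\eta^{j}$ for all $j$. Hence
\begin{equation*}
B_n(\eta)=\eta^{-1/n}\sum_{j\ge0}\eta^{j}=\frac{\eta^{-1/n}}{1-\eta}.
\end{equation*}
The relation $\eta^2+(n+1)\eta=1$ gives $1/\eta=\eta+n+1\in(n+1,n+2)$. Therefore
\begin{equation*}
B_n(\eta)<\frac{(n+2)^{1/n}}{1-\frac{1}{n+2}}=\frac{(n+2)^{1+1/n}}{n+1}.
\end{equation*}
For large $n$ this is about $1+\frac{\log n}{n}$. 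For small $n$ I would use the exact value instead, e.g.\ $n=1$ gives $\eta=\sqrt2-1$ and $B_1(\eta)=1/(\eta(1-\eta))$.

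\textbf{A lower bound on $[\frac1n,1]$.} Every term of the series is positive, so keeping only the first two terms gives
\begin{equation*}
B_n(x)\ge x^{-1/n}+x\,x_1^{-1/n}.
\end{equation*}
Write $x=1/(a+y)$ with $y=x_1\in(0,1)$. Then $x\in[\frac1n,1]$ means $a\le n$ (with $a=n$ forcing $y=0$, the endpoint $x=1/n$, where $B_n=+\infty$ anyway). The bound becomes
\begin{equation*}
g(a,y):=(a+y)^{1/n}+\frac{y^{-1/n}}{a+y}.
\end{equation*}
The goal is to show $g(a,y)>B_n(\eta)$ for every $a\in\{1,\dots,n\}$ and $y\in(0,1)$.

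For fixed $a$, the first summand is increasing in $y$ and the second is decreasing. A crude bound is
\begin{equation*}
g(a,y)\ge a^{1/n}+\frac{y^{-1/n}}{a+1}\ge a^{1/n}+\frac{1}{a+1}.
\end{equation*}
For $a=n$ this is $n^{1/n}+\frac{1}{n+1}$, which must be compared with $\frac{(n+2)^{1+1/n}}{n+1}=(n+2)^{1/n}+\frac{(n+2)^{1/n}}{n+1}$. The bound loses slightly: $(n+2)^{1/n}-n^{1/n}\approx\frac{2}{n^2}$, while $\frac{(n+2)^{1/n}-1}{n+1}\approx\frac{\log n}{n^2}$.

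So the crude estimate is not enough, and I would instead minimize $g(a,\cdot)$ exactly. The critical point satisfies
\begin{equation*}
\tfrac1n(a+y)^{1/n-1}=y^{-1/n}\Bigl[\tfrac{1}{n\,y(a+y)}+\tfrac{1}{(a+y)^2}\Bigr].
\end{equation*}
For large $a$ the minimizer $y^*$ stays bounded away from $0$, and the second term behaves like $\frac{y^{-1/n}}{a}$.

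If the two-term bound still falls short, I would add the third term $x\,x_1\,x_2^{-1/n}$. Here $x_1=y$ close to $1$ forces $x_2$ small, which adds extra mass. Alternatively I would use the functional equation for $x\in(\frac12,1)$ (i.e.\ $a=1$) to reduce to the remaining cases. The cases $a<n$ should be easier, since $a^{1/n}$ is much smaller but the second term $\frac{y^{-1/n}}{a+y}$ is large. They are handled by the same minimization, and small $n$ (say $n\le3$) are checked directly.

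\textbf{Main obstacle.} The hard step is the uniform-in-$n$ comparison in the worst case $a=n$, $y\in(0,1)$. There the gap between $\min_y g(n,y)$ (or its three-term refinement) and $\frac{\eta^{-1/n}}{1-\eta}$ is only of order $\frac{\log n}{n^2}$. This requires careful asymptotic expansions of $t^{1/n}=e^{(\log t)/n}$ with explicit error bounds valid for all $n\ge n_0$, plus a finite numerical verification for $n<n_0$.
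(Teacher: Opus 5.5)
\textbf{There is a genuine gap: truncating the series by positivity is too weak, and the fallbacks you sketch do not repair it.}

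Discarding the tail $x\,x_1B_n(x_2)$ loses at least $x\,x_1\min B_n$. Near the left end of the cylinder $a$ this is of order $\frac{1}{a+1}$: as $y\to1^-$, your $g(a,y)\to(a+1)^{1/n}+\frac{1}{a+1}$. For $a=n$ this limit lies below $B_n(\eta)$ for every $n$, because $B_n(\eta)=\frac{(n+1+\eta)^{1/n}}{1-\eta}>(n+1)^{1/n}(1+\eta)$ and $(n+1)^{1/n}\eta>\frac{1}{n+1}$ (use $\eta>\frac{1}{n+1+1/(n+1)}$). The bound also fails for $a=n-1$, i.e.\ inside the literal interval $[\frac1n,1]$. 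For $n=2$, $\sqrt2+\frac12\approx1.914<B_2(\eta_3)\approx2.607$; for $n=10$, $10^{1/10}+\frac1{10}\approx1.359<B_{10}(\eta_{11})\approx1.398$. So minimizing $g(a,\cdot)$ exactly cannot succeed, and your ``gap of order $\frac{\log n}{n^2}$'' has the wrong sign.

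Adding the third term $x\,x_1x_2^{-1/n}$ does not save the cylinder $a=n$ that you single out either. At $x^*=[0;n,\overline{n+1}]$ the true excess $B_n(x^*)-B_n(\eta_{n+1})$ is only about $\frac{1}{2n^3}$. A three-term truncation throws away $\beta_2(x^*)B_n(\eta_{n+1})\approx n^{-3}$; for $n=10$ it gives $1.39749<1.39808$. The real slack there is $O(n^{-3})$, and your target $\frac{(n+2)^{1+1/n}}{n+1}$ overshoots $B_n(\eta_{n+1})$ by roughly the same amount. For $n=10$ it equals $1.398642\ldots$, which exceeds $B_{10}(x^*)=1.398615\ldots$, so no valid lower bound on that cylinder can beat it.

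The derivation of that target is also backwards: $1/\eta<n+2$ means $\eta>\frac{1}{n+2}$, hence $\frac{1}{1-\eta}>\frac{n+2}{n+1}$. The inequality survives only because its right side is $h_n(\frac{1}{n+2})$ with $h_n(x)=\frac{x^{-1/n}}{1-x}$ decreasing on $(0,\frac{1}{n+1})$. Finally, ``checking small $n$ directly'' cannot be done by evaluating $B_n$ at finitely many points, since $B_n$ is unbounded near every rational. It needs a rigorous lower bound of the kind described next.

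\textbf{The missing idea} is to bound the discarded tail from below by the minimum of $B_n$ rather than by zero, and to bootstrap that minimum. From $B_n(x)=x^{-1/n}+xB_n(A(x))$ one gets $B_n(x)\ge x^{-1/n}+bx$ whenever $b\le\min B_n$, hence $\min B_n\ge(bn)^{1/(n+1)}(1+\frac1n)$. Iterating from $b=1$ yields $\min B_n\ge b^*=\frac{(n+1)^{1+1/n}}{n}=h_n(\frac{1}{n+1})$. This is within $O(n^{-5})$ of $B_n(\eta_{n+1})$, which is exactly why it is accurate enough.

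The paper then uses $x^{-1/n}+b^*x$ on $[0,1]$, and $x^{-1/n}+x(\frac1x-n)^{-1/n}+(1-nx)b^*$ on $(\frac{1}{n+1},\frac1n)$. Convexity of both, together with the signs of their derivatives at $\xi_n=\frac{1}{n+1/n}$, reduces the claim on all of $[\frac{1}{n+1},1]$ to the single inequality $g(\xi_n)>B_n(\eta_{n+1})$. Its margin is $\sim\frac{1}{2n^3}$, and it is settled numerically for small $n$ and by a certified sign argument for $\gamma-\beta$ on $(0,\frac{1}{10}]$.
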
    
\begin{pro}
    \label{MT}
Let $n\in\N.$ If
 \begin{equation}\label{C22}
\sigma<n+\eta_{n+1}
\end{equation} 
 and
\begin{equation}\label{C1}
\min_{x\in[0,1]}B_{\sigma}(x)=B_{\sigma}(r) \text{ with }  r\leq \frac{1}{n+1}.
\end{equation}
Then 
$
r=\eta_{n+1}.$
\end{pro}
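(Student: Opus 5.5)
The plan rests on the functional equation $B_\sigma(x)=x^{-1/\sigma}+x\,B_\sigma(A(x))$ and on the auxiliary function
\[
f(x):=\frac{x^{-1/\sigma}}{1-x},\qquad x\in(0,1).
\]
Since $\eta_{n+1}$ is a fixed point of $A$, the functional equation gives $B_\sigma(\eta_{n+1})=f(\eta_{n+1})$. Write $m:=B_\sigma(r)$ for the minimum value and $y:=A(r)$. Because $m$ is the global minimum, $B_\sigma(y)\ge m$, hence
\[
m=r^{-1/\sigma}+r\,B_\sigma(y)\ge r^{-1/\sigma}+r\,m ,
\]
that is, $f(r)\le m\le B_\sigma(\eta_{n+1})=f(\eta_{n+1})$. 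The first step is to exploit this sandwich through the shape of $f$. Since $(\log f)'(x)=-\frac{1}{\sigma x}+\frac{1}{1-x}$, $f$ is strictly decreasing on $(0,\frac1{\sigma+1}]$ and strictly increasing afterwards. The hypothesis \eqref{C22} together with the identity $\eta_{n+1}=\frac{1}{n+1+\eta_{n+1}}$ gives $\frac1{\sigma+1}>\eta_{n+1}$. So $f(r)\le f(\eta_{n+1})$ with $r\le\frac1{n+1}$ forces $r\ge\eta_{n+1}$. This is where \eqref{C22} enters. Consequently $r\in[\eta_{n+1},\frac1{n+1}]$, so the first partial quotient of $r$ is $n+1$, $r=\frac1{n+1+y}$, and $y\le\eta_{n+1}<\frac1{n+1}$.

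The second step is to show that $y$ is again a minimizer, so that the argument can be iterated. From the sandwich we get the quantitative slack
\[
0\le B_\sigma(y)-m=\frac{m-f(r)}{r}\cdot(1-r)\;\lesssim\;\frac{f(\eta_{n+1})-f(r)}{r}.
\]
I intend to combine this with a competitor argument in the spirit of \cite{BaMa_20}. Compare $r$ with the points $\frac{1}{n+1+t}$ for suitable $t$, for instance $t=\eta_{n+1}$ or $t$ obtained by deleting or altering the second partial quotient. The function $t\mapsto(n+1+t)^{1/\sigma}+\frac{B_\sigma(t)}{n+1+t}$ must be minimized at $t=y$. I also use the lower bound $B_\sigma(y)\ge f(y)$, which follows from $B_\sigma(A(y))\ge m$ exactly as above. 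This gives
\[
m\ge (n+1+y)^{1/\sigma}+\frac{f(y)}{n+1+y}.
\]
Since $f(y)$ blows up as $y\to0$, this forces a lower bound on $y$. The aim is to upgrade it to $y\ge\eta_{n+1}$ and $B_\sigma(y)=m$, i.e.\ to $y$ being a minimizer lying in $[0,\frac1{n+1}]$. Then the first step applies to $y$, and by induction every iterate $A^j(r)$ lies in $[\eta_{n+1},\frac1{n+1}]$ with partial quotient $n+1$. Hence all partial quotients of $r$ equal $n+1$, and $r=\eta_{n+1}$.

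An alternative to the full induction is to iterate only the interval information. If all $A^j(r)$ lie in $[\eta_{n+1},\frac1{n+1}]$, then the partial quotients are all $n+1$ directly. It would even suffice to show that each $A^j(r)$ is squeezed into an interval around $\eta_{n+1}$ whose length contracts under the inverse branch $t\mapsto\frac1{n+1+t}$.

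The main obstacle is the second step: controlling $y=A(r)$ from below, equivalently showing that the slack $B_\sigma(y)-m$ vanishes. The first step only yields the upper bound $y\le\eta_{n+1}$. The inequality coming from $f$ alone is not symmetric, so the extra input must come from a well-chosen competitor point and a careful use of \eqref{C22}. I would first try the competitor $\frac{1}{n+1+\eta_{n+1}}$ versus $\frac1{n+1+y}$, expanded to second order in $B_\sigma(y)$, and check whether the resulting inequality closes exactly under $\sigma<n+\eta_{n+1}$.
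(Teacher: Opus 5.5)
Your first step is correct and coincides with the first half of the paper's proof. From $f(r)\le m\le B_\sigma(\eta_{n+1})=f(\eta_{n+1})$, and since $f$ is decreasing on $(0,\frac{1}{\sigma+1}]\ni\eta_{n+1}$, you get $r\ge\eta_{n+1}$.

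The gap is the reverse inequality $r\le\eta_{n+1}$ (equivalently $y\ge\eta_{n+1}$), which your proposal never establishes. The second step is only a programme (competitor points, showing $y$ is again a minimizer, induction), and the one concrete inequality it uses is not justified. Repeating the argument ``exactly as above'' at $y$ only gives $B_\sigma(y)=y^{-1/\sigma}+y\,B_\sigma(A(y))\ge y^{-1/\sigma}+y\,m$, because $B_\sigma(y)$ is not known to equal $m$. Upgrading this to $B_\sigma(y)\ge f(y)$ would require $m\ge f(y)$. But $y\le\eta_{n+1}<\frac{1}{\sigma+1}$ and $f$ is decreasing there, so in fact $f(y)\ge f(\eta_{n+1})\ge m$, the opposite direction. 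Hence the bound $m\ge(n+1+y)^{1/\sigma}+\frac{f(y)}{n+1+y}$ is not available, and nothing in the proposal yields $y\ge\eta_{n+1}$.

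The missing idea is to push your own trick one level deeper. This is what the paper does, taking $K=1$ in $B_\sigma=B_\sigma^{(K)}+\beta_K\,B_\sigma\circ A^{K+1}$; no competitor and no induction are needed. Insert the correct bound $B_\sigma(y)\ge y^{-1/\sigma}+y\,m$ into $m=r^{-1/\sigma}+r\,B_\sigma(y)$, and use $ry=1-(n+1)r$, which is valid since $r\in[\eta_{n+1},\frac{1}{n+1})$. You get
\[
m\ge F(r),\qquad F(x):=\frac{1}{n+1}\Bigl[x^{-1-1/\sigma}+\Bigl(\frac{1}{x}-(n+1)\Bigr)^{-1/\sigma}\Bigr].
\]
The fixed-point property gives $F(\eta_{n+1})=\frac{\eta_{n+1}^{-1/\sigma}}{1-\eta_{n+1}}=B_\sigma(\eta_{n+1})\ge m$. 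The function $F$ is strictly convex on $(\frac{1}{n+2},\frac{1}{n+1})$. Using $\frac{1}{\eta_{n+1}}-(n+1)=\eta_{n+1}$ one finds
\[
(n+1)\,F'(\eta_{n+1})=\frac{1}{\sigma}\,\eta_{n+1}^{-2-1/\sigma}\bigl(n+\eta_{n+1}-\sigma\bigr)>0 .
\]
So $F$ is strictly increasing on $[\eta_{n+1},\frac{1}{n+1})$, and $F(r)\le F(\eta_{n+1})$ forces $r\le\eta_{n+1}$.

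This also resolves the asymmetry you noticed. Under the same hypothesis $\sigma<n+\eta_{n+1}$, the one-step function $f$ is decreasing through $\eta_{n+1}$ while the two-step function $F$ is increasing through it. The two sandwiches together pin down $r=\eta_{n+1}$.
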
   
\begin{rem}
    Proposition \ref{MT} actually holds also for $n=0$: in this case condition \eqref{C22} is automatically verified, and one gets that the point of minimum $r=\eta_1=\frac{\sqrt{5}-1}{2}$ for all $\sigma\in (0,\eta_1)$.
\end{rem}

\begin{proof}[Proof of Theorem \ref{MTV}]
    By the a priori estimate \eqref{war1}, any minimizer $r$ of $B_n$ satisfies $r\le 1/(n+1)$; moreover $\sigma=n$ trivially satisfies condition \eqref{C22} because $\eta_{n+1}>0$. Hence Theorem \ref{MT} applies and gives $r=\eta_{n+1}$.
\end{proof}

\begin{figure}[h]
 \centering
 \includegraphics[width=0.7\textwidth]{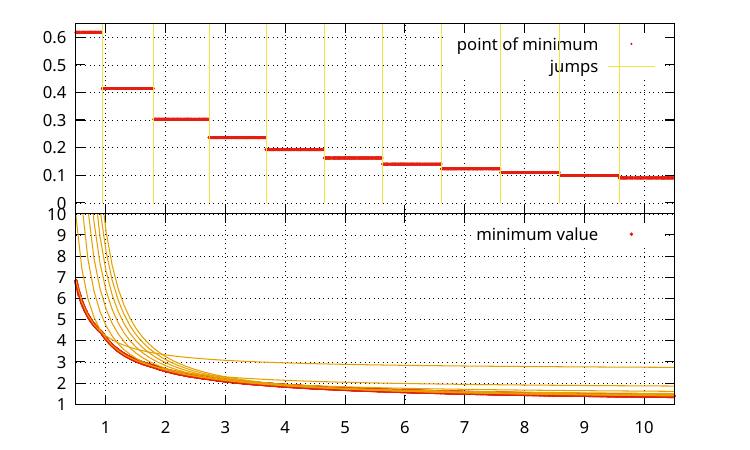} \label{minima1}
 \caption{   (Approximate) behaviour of the minimum point and minimum value as $\sigma$ ranges. The lower part shows the minimum value compared to the plot of the function
$\sigma \mapsto B_\sigma(\eta_n) $ for $1\leq n \leq 8$,
where $\eta_n=(\sqrt{n^2+4}-n)/2=[0; \overline{n}]$
is the n-th fixed point of the Gauss map. The point of minimum seems to undergo a jump at values $\sigma^*_n$ defined by equation \eqref{sstar}. 
 The yellow vertical lines in the upper part are drawn at the value $\sigma=\sigma^{*}_{n}.$}
\end{figure} 

Actually, numerical evidence  suggests an interesting phase transition phenomenon occurs: it seems that the minimizer does not drift continuously as $\sigma$ varies, but remains ``locked" at one of the fixed points of the Gauss map $A$, until it `jumps' to the next fixed point of $A$ when $\sigma$ reaches a critical threshold. This behaviour is quite evident from our numerical evidence (see Figure \ref{minima1}), and can be summarized as follows:

\begin{con}
    For all $n\in \N,$ there exists a value $\sigma^*_n \in (n-1,n)$ such that
    \begin{equation}\label{transition}
    \min_{x\in [0,1]} B_{\sigma}(x)=
   \begin{cases}
   B_{\sigma}(\eta_{n}) & \mbox{ if } \sigma \in [n-1,\sigma^*_n)\\ 
       B_{\sigma}(\eta_{n+1})& \mbox{ if } \sigma \in [\sigma^*_n , n].
   \end{cases}
    \end{equation}
    \end{con}
It is expected that  for all $n\in\N,$ this transition occurs at the point $\sigma_{n}^*$  which satisfies $B_{\sigma_{n}^*}(\eta_{n})=B_{\sigma^*}(\eta_{n+1})$, namely  
\begin{equation}\label{sstar}
\sigma^*_{n}=\frac{ \log\frac{\eta_{n}}  {\eta_{n+1}} } { \log\frac{1-\eta_{n+1}} {1-\eta_{n}}    }.
\end{equation}
One can compute the asymptotic expansion: $\sigma^*_{n}= n-\frac{1}{2}+\frac{5}{6n}+O(\frac{1}{n^2}),$ as $n\to\infty$; this tells that when $n>>1$ the jump approximately occurs at half integers. 
    
Even though our methods do not fully establish Conjecture \ref{transition}; Propositions \ref{Reg} and \ref{MT} allow us to prove
 that the point of minimum does not change when $\sigma$ ranges in a suitable neighbourhood (nbd) of $n.$

\begin{thm}\label{MTV2}
    For every $n \in \N,$ there exists an open interval  $U_n$ containing $n$ such that
    $$ \min_{x\in [0,1]} B_{\sigma}(x)=B_{\sigma}(\eta_{n+1}), \ \ \ \forall \sigma \in U_n.$$
\end{thm}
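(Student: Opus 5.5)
Theorem \ref{MTV2} will be obtained by perturbing the two ingredients used for Theorem \ref{MTV}, namely Proposition \ref{Reg} and Proposition \ref{MT}, in the parameter $\sigma$. Proposition \ref{MT} already holds on an open set of parameters, since its hypothesis \eqref{C22} is the open condition $\sigma<n+\eta_{n+1}$, which is satisfied on a neighbourhood of $n$. So the whole task is to show that the a priori estimate \eqref{war1} survives for $\sigma$ close to $n$. Concretely, we must find $\delta>0$ such that for $|\sigma-n|<\delta$ every minimizer $r$ of $B_\sigma$ satisfies $r\le \frac{1}{n+1}$. Once this is done, Proposition \ref{MT} gives $r=\eta_{n+1}$, and we may take $U_n=(n-\delta,\min(n+\delta,n+\eta_{n+1}))$.

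There is a mismatch between the estimate we need and the one we have. Proposition \ref{Reg} only controls $[\frac1n,1]$, while Proposition \ref{MT} needs $r\le\frac1{n+1}$. I expect the proof of Proposition \ref{Reg} in fact handles the gap $(\frac{1}{n+1},\frac1n)$ as well, or that an analogous comparison does; this is what the proof of Theorem \ref{MTV} implicitly uses. So I will work with the strict inequality
\[
m_n(\sigma):=\inf_{x\in[\frac1{n+1},1]}B_\sigma(x) > B_\sigma(\eta_{n+1}) \quad\text{at } \sigma=n,
\]
where the infimum on the closed interval is attained by lower semicontinuity. The point $x=\frac1{n+1}$ is rational, so $B_\sigma=+\infty$ there and it is harmless. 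The plan is then to show that both sides vary continuously, or at least semicontinuously in the right direction, with $\sigma$.

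For the right-hand side, $\eta_{n+1}$ is a fixed point of the Gauss map, so $\beta_{j-1}=\eta_{n+1}^j$ and
\[
B_\sigma(\eta_{n+1})=\frac{\eta_{n+1}^{-1/\sigma}}{1-\eta_{n+1}}.
\]
This is explicit and continuous in $\sigma$.

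For the left-hand side we need lower semicontinuity of $\sigma\mapsto m_n(\sigma)$ at $\sigma=n$. I would get it from a uniform comparison between $B_\sigma$ and $B_n$. Each term satisfies
\[
\beta_{j-1}x_j^{-1/\sigma}=\beta_{j-1}x_j^{-1/n}\,x_j^{1/n-1/\sigma}.
\]
Take $\sigma\ge n-\delta$. For $\sigma\ge n$ the factor $x_j^{1/n-1/\sigma}$ is at least $x_j^{1/n-1/\sigma}\ge x_j^{\epsilon}$ with $\epsilon$ small, while for $\sigma<n$ the factor is at least $1$. Comparing termwise is delicate because small $x_j$ make $x_j^{\epsilon}$ small. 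Instead I would use monotonicity: $x_j^{-1/\sigma}$ is increasing in $\sigma$ when $x_j<1$, which always holds. Hence $B_\sigma\ge B_{\sigma'}$ for $\sigma\ge\sigma'$, so $B_\sigma$ is pointwise nondecreasing in $\sigma$. This gives $m_n(\sigma)\ge m_n(n-\delta)$ for $\sigma\ge n-\delta$, and it reduces everything to proving the strict estimate at the single parameter $\sigma=n-\delta$, compared against $B_{n+\delta}(\eta_{n+1})$.

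To finish, write $c_n=m_n(n)-B_n(\eta_{n+1})>0$. It then suffices to show $m_n(n-\delta)\to m_n(n)$ as $\delta\downarrow0$, that is, lower semicontinuity from the left. This is the main obstacle. Since the $B_\sigma$ increase to $B_n$ pointwise as $\sigma\uparrow n$ and each is lower semicontinuous on the compact set $[\frac1{n+1},1]$, a Dini-type argument for increasing families of lower semicontinuous functions gives convergence of the minima. The argument goes as follows. Take minimizers $x_\delta$ of $B_{n-\delta}$ and a limit point $x_*$ of them. For fixed $\delta'$ and all $\delta<\delta'$ we have $B_{n-\delta}(x_\delta)\ge B_{n-\delta'}(x_\delta)$. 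Passing to the limit with lower semicontinuity gives $\liminf_{\delta} m_n(n-\delta)\ge B_{n-\delta'}(x_*)$, and letting $\delta'\to0$ gives at least $B_n(x_*)\ge m_n(n)$ by monotone convergence. Combined with continuity of $B_\sigma(\eta_{n+1})$, we obtain $m_n(\sigma)>B_\sigma(\eta_{n+1})$ on a neighbourhood of $n$, and the theorem follows from Proposition \ref{MT}.
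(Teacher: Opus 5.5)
Your route differs from the paper's and can be made to work. However, the monotonicity it rests on is backwards, so the key inequalities are false as written. For $0<x_j<1$ one has $x_j^{-1/\sigma}=e^{|\log x_j|/\sigma}$, which is \emph{decreasing} in $\sigma$. Hence $B_\sigma$ is pointwise nonincreasing in $\sigma$; your own preliminary remark says exactly this, since $x_j^{1/n-1/\sigma}\le 1$ for $\sigma\ge n$ and $\ge 1$ for $\sigma<n$. Three of your claims fail as a result:
\begin{itemize}
\item $m_n(\sigma)\ge m_n(n-\delta)$ for $\sigma\ge n-\delta$ is false; the inequality goes the other way.
\item $B_\sigma$ \emph{decreases} to $B_n$ as $\sigma\uparrow n$; it does not increase.
\item $B_{n-\delta}(x_\delta)\ge B_{n-\delta'}(x_\delta)$ for $\delta<\delta'$ is reversed.
\end{itemize}
So the Dini argument, as you wrote it, does not yield $\liminf_\delta m_n(n-\delta)\ge m_n(n)$. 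On the side $\sigma<n$ it is not even needed.

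The repair is to swap the two sides.
\begin{itemize}
\item For $\sigma\le n$ you get $m_n(\sigma)\ge m_n(n)$ for free. It then suffices that $\eta_{n+1}^{-1/\sigma}/(1-\eta_{n+1})<m_n(n)$, which holds on some interval $(\sigma_-,n]$ by continuity and the strict gap at $\sigma=n$.
\item The nontrivial side is $\sigma\downarrow n$. There $B_\sigma\uparrow B_n$ pointwise, by termwise monotone convergence. Run your argument with $\sigma_k\downarrow n$, minimizers $x_k\in[\frac{1}{n+1},1]$ of $B_{\sigma_k}$, a limit point $x_*$, and $B_{\sigma_k}(x_k)\ge B_{\sigma_{k'}}(x_k)$ for $k\ge k'$. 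This gives $\lim_k m_n(\sigma_k)\ge B_n(x_*)\ge m_n(n)$. Since $m_n(n)>B_n(\eta_{n+1})\ge B_\sigma(\eta_{n+1})$ for $\sigma\ge n$, the strict inequality persists on some $[n,n+\delta)$.
\end{itemize}
With this correction and Proposition \ref{MT}, whose hypothesis is open in $\sigma$ as you note, the theorem follows. Your concern about $[\frac{1}{n+1},\frac{1}{n})$ is settled in the paper: Lemma \ref{1L} is proved on all of $[\frac{1}{n+1},1]$.

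The paper argues differently, through Remark \ref{remU}. It perturbs the explicit lower bounds $g_\sigma$ and $g_{n,\sigma}$ of Section \ref{APri}, using that the strict conditions in Lemmas \ref{1L} and \ref{hard} are open in $\sigma$. These conditions are $g_\sigma'(\xi_n)>0$, $g_{n,\sigma}'(\xi_n)<0$ and $g_\sigma(\xi_n)>B_\sigma(\eta_{n+1})$. That route keeps a computable lower bound and could in principle give an explicit $U_n$, although the paper only asserts the remark.

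Your corrected argument uses only the strict gap at $\sigma=n$, lower semicontinuity, and monotonicity in $\sigma$. It is self-contained and applies at any parameter where a strict gap is known. Its cost is that it is non-quantitative to the right of $n$.
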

The proof of this result will follow immediately from Proposition \ref{MT} as soon as we show that there exists a suitable neighbourhood $U_n$ of $n$ such that the statement of Proposition \ref{Reg} holds for $B_\sigma$ for all $\sigma \in U_n$.

The paper is structured as follows. Section \ref{Aux}  collects background on continued fractions, auxiliary results and the connection of $\sigma$-Brjuno numbers with Diophantine numbers.  
Section \ref{APri} develops a priori estimates for general values of $\sigma$. We establish a universal lower bound and utilize cylinder-specific estimates to restrict the possible locations of the global minimum. 
Section \ref{GMin} is dedicated to the proofs of our main results. Here, we establish the main theorem, proving that the global minimum is uniquely achieved at the fixed point $\eta_{n+1}$ for any $\sigma$ in an interval $U_n$ containing the integer $n$. This section covers the localization and the stability of the minimizer, showing that the optimal arithmetic property of the fixed point is robust for $\sigma < n + \eta_{n+1}$. 
In Section \ref{SPn}, we provide the scaling analysis for these minima, using the case $n=2$ as a representative model for the general behavior.

\section{Definition and properties of $\sigma$-Brjuno functions.}\label{Aux}
This section establishes the necessary background on continued fractions and defines the $\sigma$-Brjuno function. We further explore its fundamental properties and its relationship with Diophantine approximation.
\subsection{Continued fraction preliminaries.}
 For $x\in(0,1)\setminus\Q,$
the regular continued fraction of a real number $x$ is associated to the Gauss map $A:(0,1)\to(0,1)$ defined by  \begin{equation}\label{Amap}
 A(x)=\frac{1}{x}-\left[  \frac{1}{x}  \right], \; \; \mbox{for } x\neq 0,
 \end{equation}
  where  $[x]$ denotes the integer part of $x.$ A more detailed description states that the map $A$ is made of the following branches:
 \begin{align*}
 A(x)&=\frac{1}{x}-k  \quad   \text{ for} \quad   \frac{1}{k+1} <x  \leq \frac{1}{k}. \end{align*}
  Setting
 $ x_{0}=x, a_{0}=0,$  and for any $n\geq 0,$ we have $x_{n+1}=A(x_{n})=A^{n+1}(x), \quad a_{n+1}=\left[\frac{1}{x_{n}} \right]\geq 1.$
 Then
  $x_{n}^{-1}=a_{n+1}+x_{n+1}  $  and we obtain the expansion 
 \begin{equation*}
 x=a_{0}+x_{0}=\frac{ 1}{a_{1}+ x_{1}}=\cdots=\frac{1 }{a_{1}+\frac{1}{a_{2}   +\cdots  +\frac{1 }{a_{n}+x_{n}}   }         }=[0;a_{1},\cdots, a_{n},\cdots].\end{equation*}
For all $n\in\N,$ the convergents
 $
 \frac{p_{n}}{q_{n}}=[0;a_{0};a_{1},\cdots,a_{n}            ],
 $ satisfy the  recursion relation
 \begin{equation*}
 p_{n}=a_{n}p_{n-1}+p_{n-2},\quad q_{n}=a_{n}q_{n-1}+q_{n-2},\quad p_{-1}={q_{-2}}=1,\quad p_{-2}=q_{-1}=0.\end{equation*}
with 
$q_{n}p_{n-1}-p_{n}q_{n-1}=(-1)^{n}$ and we can also write
\begin{align*}
x=\frac{p_{n}+p_{n-1}   x_{n}}{q_{n}+q_{n-1}x_{n}} \text{ and }
x_{n}=- \frac{q_{n}x-p_{n}}{q_{n-1}x- p_{n-1}}.
\end{align*}
 Let $\beta_{n}:=\prod^{n}_{i=0}x_{i}$ be the product of the iterates along the orbit $A$ satisfying 
 \begin{align*}
\beta_{n}=\prod^{n}_{i=0}x_{i}=(-1)^{n}(q_{n}x-p_{n}) \text{ for } n\geq0, \text{ with } \beta_{-1}=1,
 \end{align*}
 and  $x_{n}=\frac{\beta_{n}}{\beta_{n-1}}.$ The key estimate 
$ \frac{1}{2}<\beta_{n}q_{n+1}<1$ will be used repeatedly. 

\subsection{The $\sigma$-Brjuno function.}

For $\sigma > 0$, the $\sigma$-Brjuno function is defined by the series \begin{equation}\label{ps} B_{\sigma}(x) = \sum_{j=0}^{\infty} \beta_{j-1}(x) x_j^{-1/\sigma}. \end{equation} We denote the set of $\sigma$-Brjuno numbers as $\mathcal{B}_{\sigma} = \{ x \in (0,1) \setminus \mathbb{Q} : B_{\sigma}(x) < \infty \}.$

The $\sigma$-Brjuno function defined in \eqref{ps} satisfies the functional equation
\begin{equation}\label{FE}
B_{\sigma}(x)=x^{-1/\sigma}+xB_{\sigma}(A(x)) \text{ for  all } x\in(0,1)
\end{equation}
and more generally,
\begin{equation} \label{gen}
B_{\sigma}(x)=B_{\sigma}^{(K)}(x)+\beta_{K}(x)B_{\sigma}(A^{K+1}(x)) \quad (K\in \N, x\in(0,1)\setminus \Q),
\end{equation}
where $B_{K}$ denotes the partial sum with respect to the regular continued fraction, defined as
\begin{equation*}
B_{\sigma}^{(K)}(x)=\sum_{j=0}^{K}\beta_{j-1}{(x)}(1/A^{j}(x))^{1/\sigma}.
\end{equation*}

The operator $(1-T)$, where $T$ is the transfer operator of the Gauss map, relates $B_\sigma$ to its singular part $\phi(x) = x^{-1/\sigma}$ in the following way
$$
(1 - T) B_{\sigma}(x) = \phi(x), \quad \text{where } \phi(x) = x^{-1/\sigma}.
$$
From this representation, several basic properties follow.
\begin{pro}[Integrability]
For $\sigma > 1$, we have $B_\sigma \in L^p(0,1)$ for all $1 \leq p < \sigma$. In particular, $B_2 \in L^1(0,1)$.
\end{pro}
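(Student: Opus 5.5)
We prove the statement via the transfer operator identity $(1-T)B_\sigma=\phi$ with $\phi(x)=x^{-1/\sigma}$. The natural route is the Neumann series $B_\sigma=\sum_{j\ge0}T^j\phi$, where $T$ is the (weighted) transfer operator
\[
(Tf)(x)=x\,f(A(x)).
\]
This is exactly the series \eqref{ps}, since $T^j\phi(x)=\beta_{j-1}(x)\,\phi(A^j x)$. The plan is therefore to estimate $\|T^j\phi\|_{L^p}$ for $1\le p<\sigma$ and show that these norms are summable.

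\textbf{Step 1: the first term.} Since $\phi(x)=x^{-1/\sigma}$, we have
\[
\int_0^1 \phi^p\,dx=\int_0^1 x^{-p/\sigma}\,dx<\infty \quad\text{exactly when } p<\sigma.
\]
So the range $1\le p<\sigma$ is forced already by the $j=0$ term.

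\textbf{Step 2: the higher terms via cylinders.} For $j\ge1$, split $(0,1)$ into the rank-$j$ cylinders $I(a_1,\dots,a_j)$. On each cylinder, $x=\frac{p_j+p_{j-1}y}{q_j+q_{j-1}y}$ with $y=A^j x\in(0,1)$, and
\[
\beta_{j-1}(x)=|q_{j-1}x-p_{j-1}|=\frac{1}{q_j+q_{j-1}y},\qquad \frac{dx}{dy}=\frac{1}{(q_j+q_{j-1}y)^2}.
\]
Substituting $y$ as the new variable gives
\[
\int_0^1 |T^j\phi|^p\,dx=\sum_{a_1,\dots,a_j}\int_0^1 \frac{y^{-p/\sigma}}{(q_j+q_{j-1}y)^{p+2}}\,dy\le C_{p,\sigma}\sum_{a_1,\dots,a_j} q_j^{-(p+2)}.
\]
This sum is bounded by the total length of the rank-$j$ cylinders (which is $\asymp\sum q_j^{-2}=O(1)$) times $\max q_j^{-p}$, and $q_j\ge F_{j+1}$ grows geometrically with ratio at least the golden mean $g$. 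Hence
\[
\|T^j\phi\|_p\le C\,g^{-j},
\]
which is summable in $j$. Minkowski's inequality then gives
\[
\|B_\sigma\|_p\le\sum_{j\ge0}\|T^j\phi\|_p<\infty,
\]
so $B_\sigma\in L^p(0,1)$. The particular case $B_2\in L^1$ is $p=1<2=\sigma$.

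\textbf{Main obstacle: uniformity in the cylinder sum.} The delicate point is making the bound $\sum_{a_1,\dots,a_j} q_j^{-(p+2)}\ll g^{-jp}$ rigorous. We use $q_j^{-(p+2)}=q_j^{-2}q_j^{-p}$ together with $q_j\ge q_{j-1}$, so that the cylinder length $\frac{1}{q_j(q_j+q_{j-1})}$ is comparable to $q_j^{-2}$ up to a factor $2$. This needs no distortion theory, only the elementary bound $q_j\ge g^{\,j-1}$. The weight $(q_j+q_{j-1}y)^{-(p+2)}$ has no singularity in $y$, so the only integrability issue is the $y^{-p/\sigma}$ factor, which is handled exactly as in Step 1.
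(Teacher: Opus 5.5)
Your argument is correct and follows the same route as the paper. In both, $B_\sigma=\sum_{j\ge0}T^j\phi=(1-T)^{-1}\phi$ with $Tf(x)=xf(A(x))$, the iterates of $T$ decay geometrically in $L^p$, and so the only restriction $p<\sigma$ comes from $\phi\in L^p$.

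The one difference is that the paper imports this decay by citing the regularizing property of $(1-T)^{-1}$ from \cite[Proposition 10]{LuMar_10}. You instead prove it directly via the cylinder substitution $dx=\beta_{j-1}^2\,dy$ and the bound $q_j\ge g^{\,j-1}$, where your $g$ is the golden mean $(1+\sqrt{5})/2$, not the paper's $g=[0;\overline{1}]$. Replacing $y^{-p/\sigma}$ by $|f(y)|^p$ in your computation gives exactly the general operator bound $\|T^jf\|_p\le 2^{1/p}g^{-(j-1)}\|f\|_p$ that underlies the cited result.
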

\begin{proof}
Since $\phi(x) = x^{-1/\sigma} \in L^p(0,1)$ for all $p < \sigma$, the claim follows from the regularizing property of the operator $(1-T)^{-1}$ (see \cite[Proposition 10]{LuMar_10}).
\end{proof}

\begin{rem}[Lower semi-continuity]\label{rem:LSC}
The function $B_\sigma$ is lower semi-continuous and $1$-periodic, it attains a global minimum on $[0,1],$ (see section 3.4 of \cite{BCM})\end{rem}

\subsubsection{Relation with Classical Diophantine Classes.}

To situate the $\sigma$-Brjuno condition within classical metric number theory, we recall the definition of Diophatine condition. Let $\gamma>0$ and $\tau\geq 0$ be two real numbers. An irrational number $x\in \R\setminus \Q$ is Diophantine of exponent $\tau$ and constant $\gamma$ if,  for all $p,q \in \Z,$ $q>0$ it satisfies
$$\left| x - \frac{p}{q} \right| \geq \frac{\gamma}{q^{2+\tau}}.$$
Let  $\mathrm{CD}(\gamma,\tau)$ denote the set of all such irrational numbers. We define the standard Diophantine classes as: 
$$\mathrm{CD}(\tau)=\bigcup_{\gamma>0}  \mathrm{CD}(\gamma,\tau)
\quad \text{ and } \quad \mathrm{CD}=\bigcup_{\tau\geq 0}\mathrm{CD}(\tau).$$
In terms of continued fractions, $x \in \mathrm{CD}(\tau)$ corresponds to the growth condition $q_{n+1} = O(q_n^{1+\tau})$. Notably, $\mathrm{CD}(0)$ is the set of badly approximable numbers, characterized by bounded partial quotients,  while the complement in $\R\setminus\Q$ of $\mathrm{CD}$ is the set of  Liouville numbers.
The set
of Liouville numbers has zero Lebesgue measure and zero Hausdorff dimension.
The sets $\mathrm{CD}(\tau)$ and $  \mathrm{CD}$ are both PGL $(2,\Z)$ invariant. Moreover if $\tau > 0$
then $\mathrm{CD}(\tau)$ has full Lebesgue measure. The same holds for $\bigcap_{\tau>0} \mathrm {CD}(\tau)$ (Roth
numbers).

The following remark establishes that $\sigma$ serves as a sharp threshold for these classes:
 \begin{rem}[Diophantine characterization of $\mathcal{B}_\sigma$]
 For any $\sigma > 0$,
 $$
\bigcup_{\tau < \sigma} \mathrm{CD}(\tau) \;\subset\; \mathcal{B}_\sigma \;\subset\; \bigcap_{\tau\ge\sigma}\mathrm{CD}(\tau).
 $$
 In other words:
(1). if $x \in \mathrm{CD}(\tau)$  then $B_\sigma(x) < \infty,$
for all $\sigma > \tau.$
  
  (2). On the other hand,  if $B_\sigma(x) <+ \infty,$ then $x \in \mathrm{CD}(\sigma).$\end{rem}

\begin{proof}
(1). Assume $x \in \mathrm{CD}(\tau)$ with $\tau < \sigma$. Using $q_{n+1} = O(q_n^{1+\tau})$ and the estimate $\beta_{n-1} \asymp 1/q_n$, we obtain
$$
B_\sigma(x) \asymp \sum_{n=0}^\infty \frac{q_{n+1}^{1/\sigma}}{q_n^{1+1/\sigma}}
\le C \sum_{n=0}^\infty \frac{q_n^{(1+\tau)/\sigma}}{q_n^{1+1/\sigma}}
= C \sum_{n=0}^\infty \frac{1}{q_n^{1 - \tau/\sigma}}.
$$
Since $1 - (\tau/\sigma) > 0$ and $q_n$ grows at least exponentially, the series converges.

(2). Conversely, if $B_\sigma(x) < \infty$, then the general term of the series must tend to zero:
$$
\frac{q_{n+1}^{1/\sigma}}{q_n^{1+1/\sigma}} \to 0 \quad \text{as } n \to \infty.
$$
Hence $q_{n+1} = o\bigl( q_n^{1+\sigma} \bigr).$ Consequently, $q_{n+1} = O\bigl( q_n^{1+\sigma} \bigr)$ which is equivalent to $x \in \mathrm{CD}(\sigma).$
\end{proof}
 This characterization allows for the immediate classification of algebraic irrationals by linking their degree to the $\sigma$ parameter.
 \begin{cor} 
According to celebrated Roth's theorem if $x$ is an algebraic number (regardless of degree), then $x \in \mathrm{CD}(\tau)$ for any $\tau > 0.$ Consequently, $B_\sigma(x) < +\infty$ for all $\sigma>0.$
\end{cor}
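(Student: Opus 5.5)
Since $x$ is algebraic and irrational, Roth's theorem gives, for every $\epsilon>0$, a constant $c(x,\epsilon)>0$ such that $|x-p/q|\ge c(x,\epsilon)/q^{2+\epsilon}$ for all $p/q$. This says exactly that $x\in\mathrm{CD}(c(x,\epsilon),\epsilon)\subset \mathrm{CD}(\epsilon)$. As $\epsilon>0$ is arbitrary, $x\in\bigcap_{\tau>0}\mathrm{CD}(\tau)$, so $x$ is a Roth number in the sense recalled above. (Rational $x$ is excluded, since $B_\sigma$ is only defined on irrationals, where it blows up at rationals; the statement concerns algebraic irrationals.)

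Now fix an arbitrary $\sigma>0$ and choose any $\tau$ with $0<\tau<\sigma$, for instance $\tau=\sigma/2$. Then $x\in\mathrm{CD}(\tau)\subset\bigcup_{\tau'<\sigma}\mathrm{CD}(\tau')$. Part (1) of the preceding Remark (Diophantine characterization of $\mathcal{B}_\sigma$) gives $x\in\mathcal{B}_\sigma$, that is $B_\sigma(x)<+\infty$. Since $\sigma$ was arbitrary, this holds for all $\sigma>0$.

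Part (1) of the Remark rests on several facts: the convergent bound $q_{n+1}=O(q_n^{1+\tau})$ for $x\in\mathrm{CD}(\tau)$, the estimate $\beta_{n-1}\asymp 1/q_n$ (from $\tfrac12<\beta_nq_{n+1}<1$), and the at-least-exponential growth of $q_n$ (Fibonacci lower bound). Together these make the dominating series $\sum_n q_n^{-(1-\tau/\sigma)}$ convergent. No step is a genuine obstacle; the only points to verify are the transfer from Roth's inequality to the continued-fraction growth condition, which uses $|x-p_n/q_n|\asymp 1/(q_nq_{n+1})$, and that the choice $\tau<\sigma$ is strict.
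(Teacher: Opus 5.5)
Your argument is correct and follows the same route as the paper, which states the corollary without a separate proof: Roth's theorem puts the algebraic irrational $x$ in $\bigcap_{\tau>0}\mathrm{CD}(\tau)$, and for a given $\sigma>0$ any choice $0<\tau<\sigma$ lets you apply part (1) of the Diophantine characterization remark to get $B_\sigma(x)<+\infty$. Your caveat that $x$ must be irrational is appropriate. For $x\notin(0,1)$ you simply reduce modulo $1$, using the $1$-periodicity of $B_\sigma$ and the invariance of $\mathrm{CD}(\tau)$ under integer translations.
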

 For small values of $\sigma,$ such as $0 < \sigma \le 1,$ the condition $B_\sigma(x) < \infty$ becomes significantly more restrictive. However, the set of $\sigma$-Brjuno numbers remains robust in a measure-theoretic sense: for every $0 < \sigma \le 1,$ the set $\mathcal{B}_\sigma$ is non-empty, has Hausdorff dimension $1$ and contains all quadratic irrationals (badly approximable numbers); including the golden ratio $g = [0;\overline{1}]$.  This is easily seen, as
 for any $x$ with bounded partial quotients, the denominators $q_n$ grow at least exponentially. Even when $\sigma$ is small, the exponential growth of the denominators in the series $\sum \beta_{n-1} x_n^{-1/\sigma}$ ensures geometric convergence. 

With the convergence properties and Diophantine links established, we now turn to the core of our work: the study of the global minimizers of $B_\sigma$. While algebraic numbers    are guranteed to stay with in $\mathcal{B}_\sigma$, we must now determine which specific arithmetic structures minimize the value of the function.

\section{A priori estimates.}\label{APri}
We begin by establishing a global lower bound for $B_\sigma$ which will be essential to show that the minimum of $B_\sigma$ is confined in the interval $[0,1/(n+1)]$. The rough idea is that  the functional equation \eqref{FE} implies that $B_\sigma$ satisfies the following simpler relation 
\begin{equation}\label{SFE}
B(x)\geq x^{-1/\sigma}+x \min_{ y\in [0,1]} B(y).
\end{equation}
It is not difficult to find a solution $g$ for equation \eqref{SFE} which is optimal (in the sense that $g$ satisfies \eqref{SFE} with an equal sign); $g$ is a lower bound for $B_\sigma$ and it is the best lower bound we can get just using \eqref{SFE}.

\begin{pro}
   Let $b^*:=\frac{(\sigma+1)^{1+1/\sigma}}{\sigma}$ and $g(x):= x^{-1/\sigma}+b^* x$. Then
    
    $$B_\sigma(x)\geq g(x)  \ \ \ \ \forall x \in [0,1], \ \ \ \ \ \min_{x\in [0,1]} B_\sigma(x) \geq \min_{x\in [0,1]} g(x)=b^*.$$
\end{pro}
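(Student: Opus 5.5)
Let $m:=\min_{y\in[0,1]}B_\sigma(y)$, which exists by Remark \ref{rem:LSC}. The plan is to first bound $m$ from below by the constant $b^*$, and then feed this back into the functional equation.

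\textbf{Step 1: the lower bound on $m$.} By the functional equation \eqref{FE}, for every $x\in(0,1)$ we have $B_\sigma(x)=x^{-1/\sigma}+xB_\sigma(A(x))\ge x^{-1/\sigma}+x\,m$, which is \eqref{SFE}. Evaluate this at a minimizer $r$ of $B_\sigma$. We have $r\neq 0$, since $B_\sigma$ blows up at rationals (or by $1$-periodicity the value at $0$ is $+\infty$). We also have $r\neq 1$ for the same reason, so $r\in(0,1)$. This gives
\begin{equation*}
m\ge r^{-1/\sigma}+r\,m,\qquad\text{that is,}\qquad m\ge \frac{r^{-1/\sigma}}{1-r}.
\end{equation*}
It therefore suffices to minimize $h(r):=r^{-1/\sigma}/(1-r)$ over $(0,1)$. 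Logarithmic differentiation gives $-\frac{1}{\sigma r}+\frac{1}{1-r}=0$, so $r_0=\frac{1}{\sigma+1}$. At this point $1-r_0=\frac{\sigma}{\sigma+1}$, and hence
\begin{equation*}
h(r_0)=(\sigma+1)^{1/\sigma}\cdot\frac{\sigma+1}{\sigma}=b^*.
\end{equation*}
Since $h\to\infty$ at both endpoints, $r_0$ is the global minimizer of $h$, and so $m\ge b^*$.

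\textbf{Step 2: the pointwise bound.} Inserting $m\ge b^*$ into \eqref{SFE} yields $B_\sigma(x)\ge x^{-1/\sigma}+b^*x=g(x)$ for all $x\in(0,1)$. At the endpoints the inequality is trivial, because $B_\sigma=+\infty$ at rationals (with the convention $g(0)=+\infty$).

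\textbf{Step 3: identifying $\min g$.} Compute $g'(x)=-\frac1\sigma x^{-1/\sigma-1}+b^*$. This vanishes at $x_0^{1+1/\sigma}=\frac{1}{\sigma b^*}=(\sigma+1)^{-(1+1/\sigma)}$, so $x_0=\frac1{\sigma+1}=r_0$. Then
\begin{equation*}
g(x_0)=(\sigma+1)^{1/\sigma}+\frac{b^*}{\sigma+1}=(\sigma+1)^{1/\sigma}\Bigl(1+\frac1\sigma\Bigr)=b^*.
\end{equation*}
Since $g$ is convex on $(0,1]$, this critical point is its minimum, so $\min g=b^*$. This also makes precise the optimality claim in the text: $g$ satisfies \eqref{SFE} with equality, with $\min g$ in place of $\min B$.

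\textbf{Main obstacle.} The only delicate point is justifying that the minimizer $r$ lies in the open interval $(0,1)$ and that \eqref{FE} applies there. This reduces to $B_\sigma$ being $+\infty$ at rationals, together with its lower semicontinuity; everything else is elementary calculus.
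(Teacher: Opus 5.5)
Your proof is correct, but it takes a different route from the paper's.

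\textbf{The paper's route.} It never uses a minimizer. It starts from the trivial bound $b_1=1\le\inf B_\sigma$ and repeatedly feeds a constant lower bound $b$ back into \eqref{SFE}. Each step yields the improved bound $\varphi(b)=(b\sigma)^{1/(\sigma+1)}(1+\sigma^{-1})$, which is the unconstrained minimum of $x^{-1/\sigma}+bx$. The increasing sequence $b_{k+1}=\varphi(b_k)$ then converges to the attracting fixed point $b^*$ of $\varphi$.

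\textbf{Your route.} You evaluate \eqref{SFE} at an actual minimizer $r$ and solve for $m$, obtaining $m\ge r^{-1/\sigma}/(1-r)\ge\min_{(0,1)}h=b^*$ in one step. This is the same device the paper uses later with $h_\sigma$ in \eqref{utile}, namely the case $K=0$ of Proposition \ref{P3s}. The two computations express the same self-consistency condition, which is why $h$ and $g$ attain the same minimum $b^*$ at the same point $1/(\sigma+1)$.

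\textbf{What each buys.} Your version is shorter and avoids the monotonicity and convergence analysis of the iteration. It also avoids a small detail: when $b\sigma<1$, the critical point of $x^{-1/\sigma}+bx$ lies outside $[0,1]$, so $\varphi(b)$ is only a lower bound for the minimum over $[0,1]$. The price is that your argument genuinely needs the minimum to be attained at a point with $B_\sigma(r)<\infty$. You get this from lower semicontinuity together with finiteness of $B_\sigma$ somewhere, e.g.\ at $\eta_1$, where the series is geometric; you should state this explicitly, since it is what guarantees that $r$ is irrational and that you may divide by $1-r$. The paper's iteration, by contrast, works with $\inf B_\sigma$ alone.

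That dependence can in fact be removed. With $m=\inf B_\sigma$, \eqref{SFE} gives $m\ge\varphi(m)$, and since $\varphi(b)>b$ for all $0<b<b^*$, this already forces $m\ge b^*$ with no iteration and no minimizer.
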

\begin{proof}
    From the functional equation \eqref{FE}  we immediately get that, if $b\leq \min_{x\in [0,1]} B_\sigma(x)$ then
    $$B_\sigma(x)\geq x^{-1/\sigma}+b x.$$
    So, if $b<b^*$ is lower bound for $B_\sigma$ we can get a better lower bound $b'\in (b,b^*) $  as follows:  calling $f_b(x):=x^{-1/\sigma}+b x$,
    $$\min_{x\in [0,1]} B_\sigma(x)  \geq \min_{x\in [0,1]} f_b(x)= f_b((b\sigma)^{-\frac{\sigma}{\sigma+1}})=(b\sigma)^{\frac{1}{\sigma+1}}(1+\sigma^{-1}):=b'.$$
    Using repeatedly this fact, and starting from the lower bound $b_1=1$, we get an infinite sequence of lower bounds $b_k$:
    $b_{k+1}= \varphi(b_k)$ with $\varphi(x)=(x\sigma)^{\frac{1}{\sigma+1}}(1+\sigma^{-1}).$ It is easy to check that $\varphi(b)>b$ if $b<b^*$, hence  $b_k$ is strictly increasing; moreover $\lim_{k\to\infty} b_k=b^*$ where $\varphi(b^*)=(b^*)$ is the unique attractive fixed point of $\varphi$. 
\end{proof}

We can also use \eqref{gen} to improve the lower bound on the cylinder $(1/(k+1),1/k)$:
\begin{equation}\label{cylinder-bound}
    B_\sigma(x) \geq g_{k}(x), \ \ \ \mbox{ with } \ \ g_{k}(x)=x^{-1/\sigma} + x(\frac{1}{x}-k)^{-1/\sigma} + (1-kx)b^*.
\end{equation}

Note that $g_k-g$ is convex on $(\frac{1}{k+1}, \frac{1}{k})$; moreover setting $p=\frac{1}{k+\frac{1}{\sigma+1}}$ one checks that $g_k(p)-g(p)=g'_k(p)-g'(p)=0$, therefore 
\begin{equation}\label{gk}
g_k(x)\geq g(x) \quad \quad \forall x \in \left(\frac{1}{k+1}, \frac{1}{k}\right),
\end{equation}
which means that the new local lower bound $g_k$ on  always improves the global bound $g$ on its definition interval $(\frac{1}{k+1}, \frac{1}{k}).$

\begin{figure}
    \centering
    \includegraphics[width=0.5\linewidth]{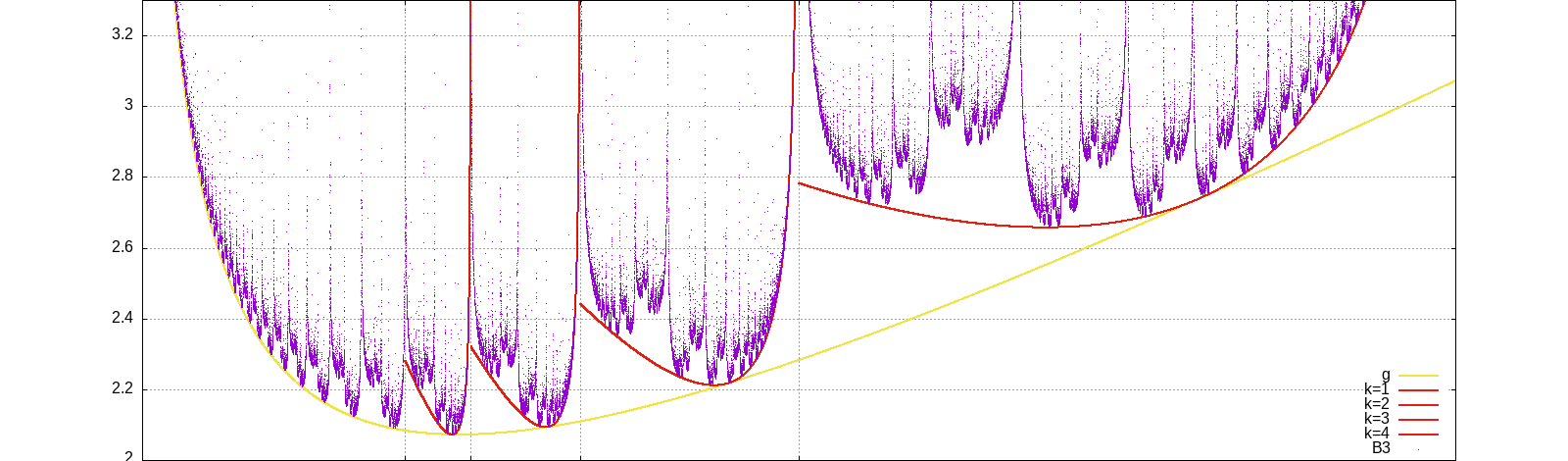}\label{bb}
    \caption{The function $B_3$ and the lower bounds $g$ (in yellow) and $g_k$ (in red) for $k=1,2,3,4$.}
    \label{fig:gvsgk}
\end{figure}
\subsection{Proof of Proposition \ref{Reg}.}

It turns out that for $\sigma=1$ the local lower bound \ref{cylinder-bound} is sufficient to establish Proposition \ref{Reg}: indeed a rigorous numerical computation shows that $$\min_{[1/2,1]} B_1(x)\geq \min_{[1/2,1]} g_1(x)>B_1(\eta_{n+1}).$$
In this case the computation is simple because point of minimum of $g_1$ is the root of the polynomial $-5x^4 + 10x^3 - 5x^2 + 2x - 1 = 0$; in principle an analogous strategy works also for other values of $n$, but the computations get heavier and heavier, and in any case a direct numerical computation cannot be the right way to establish the result for all $n\in \N$. 

For $n\geq 2$ let us set  $\xi_{n}:=\frac{1}{n+\frac{1}{n}}\in (\frac{1}{n+1}, 1)$. We claim that 
\begin{equation}\label{war}
\min_{x\in[\frac{1}{n},1]}B_{n}(x)\geq g(\xi_{n})>B_{n}(\eta_{n+1}).
\end{equation}
From this it follows that
\begin{equation}
    \min_{x\in[0,1]}B_{n}(x)=B_{n}(r) \Longrightarrow r\leq \frac{1}{n+1}.
\end{equation}
Thus, to complete the  proof of  Proposition \ref{Reg}, it suffices to establish Lemmas \ref{1L} and \ref{hard} below,  which correspond to the two required inequalities in \eqref{war}.
\begin{lem}\label{1L}
\begin{equation*}
B_{n}(x)\geq g(\xi_{n}) \quad \quad \forall x\in\left[\frac{1}{n+1},1\right].
\end{equation*}
\end{lem}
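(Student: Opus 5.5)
Since $\sigma=n$, the minimum of $g(x)=x^{-1/n}+b^*x$ sits at $x_0=(b^*n)^{-n/(n+1)}$. Using $b^*n=(n+1)^{1+1/n}$, this gives $x_0=\frac{1}{n+1}$. Hence $g$ is strictly increasing on $[\frac{1}{n+1},1]$. The global bound $g$ alone gives only $g(\frac{1}{n+1})<g(\xi_n)$, which is not enough. I therefore split $[\frac1{n+1},1]$ into the cylinders $(\frac1{k+1},\frac1k]$, $k=1,\dots,n$, and treat $k<n$ and $k=n$ separately.

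\emph{Cylinders $k<n$.} Here $x\ge \frac1n$. By $B_n\ge g$, monotonicity of $g$ on $[\frac1{n+1},1]$, and $\xi_n=\frac{1}{n+1/n}<\frac1n$, we get
$$B_n(x)\ge g(x)\ge g(\tfrac1n)>g(\xi_n).$$
Rational endpoints are harmless, since $B_n=+\infty$ there (or one can use lower semicontinuity).

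\emph{Cylinder $k=n$, i.e.\ $x\in(\frac1{n+1},\frac1n)$.} Note that $\xi_n$ lies in this cylinder. For $x\ge\xi_n$, \eqref{gk} and monotonicity of $g$ give $B_n(x)\ge g_n(x)\ge g(x)\ge g(\xi_n)$. For $x\in(\frac1{n+1},\xi_n]$ I use the sharper bound \eqref{cylinder-bound}. Change variables to $y=\frac1x-n=A(x)\in[\frac1n,1)$ and set $f(y)=y^{-1/n}+b^*y$. Then
$$g_n(x)=h(y):=(n+y)^{1/n}+\frac{f(y)}{n+y},$$
and $y=\frac1n$ corresponds to $x=\xi_n$. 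The claim follows if $h$ is nondecreasing on $[\frac1n,1]$. In that case $g_n(x)=h(y)\ge h(\frac1n)=g_n(\xi_n)\ge g(\xi_n)$, the last inequality again by \eqref{gk}.

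\emph{Monotonicity of $h$ (the main step).} The derivative is
$$h'(y)=\tfrac1n(n+y)^{1/n-1}+\frac{N(y)}{(n+y)^2},\qquad N(y)=f'(y)(n+y)-f(y)=b^*n-y^{-1/n-1}-\bigl(1+\tfrac1n\bigr)y^{-1/n}.$$
The first term of $h'$ is positive, so it suffices to show $N\ge0$ on $[\frac1n,1]$. Since $N$ is increasing in $y$, it is enough to check $N(\frac1n)\ge 0$. This reduces to
$$(n+1)^{1+1/n}\ \ge\ n^{1/n}\bigl(n+1+\tfrac1n\bigr),\quad\text{equivalently}\quad (n+1)\bigl(1+\tfrac1n\bigr)^{1/n}\ge n+1+\tfrac1n.$$
Bernoulli's inequality $(1+\frac1n)^{1/n}\ge 1+\frac1{n^2}$ is not available here, since the exponent $1/n$ is at most $1$. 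Instead I expand $(1+\frac1n)^{1/n}=\exp\bigl(\frac1n\log(1+\frac1n)\bigr)$. Using $e^t\ge 1+t$ and $\log(1+u)\ge u-\frac{u^2}{2}$, this gives
$$\bigl(1+\tfrac1n\bigr)^{1/n}\ge 1+\frac1{n^2}-\frac1{2n^3}.$$
Multiplying by $n+1$ gives $n+1+\frac1n+\frac{1}{2n^2}-\frac{1}{2n^3}\ge n+1+\frac1n$ for all $n\ge1$, as required.

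This elementary inequality is the step where care is needed. If it proved delicate, the fallback is to check small $n$ numerically and use the asymptotic expansion for large $n$. Combining the two cases yields $B_n(x)\ge g(\xi_n)$ for all $x\in[\frac1{n+1},1]$.
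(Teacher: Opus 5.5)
Your proof is correct and follows essentially the same route as the paper: you split at $\xi_n$, use $B_n\ge g$ with monotonicity of $g$ on $[\xi_n,1]$, and on $[\frac{1}{n+1},\xi_n]$ show $g_n(x)\ge g_n(\xi_n)\ge g(\xi_n)$ via the cylinder bound and the tangency estimate $g_n\ge g$. Your key inequality $N(1/n)\ge 0$, i.e.\ $(n+1)(1+\frac1n)^{1/n}\ge n+1+\frac1n$, is exactly the paper's $\Psi(1/n)<0$ with $\Psi(x)=1+x+x^2-(1+x)^{1+x}$; the differences are minor: you prove it via $e^t\ge 1+t$ and $\log(1+u)\ge u-u^2/2$ rather than $\Psi''<0$, and you propagate the sign of the derivative through the monotonicity of $N$ rather than the convexity of $g_n$.
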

\begin{lem}\label{hard}
For all $n\in \N$, $n\geq 2$ we have
\begin{equation}\label{xieta}
    g(\xi_{n})>B_{n}(\eta_{n+1}).
\end{equation}
\end{lem}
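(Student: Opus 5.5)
We would prove Lemma \ref{hard} by making both sides of \eqref{xieta} completely explicit and then comparing them with elementary inequalities. Since $\eta:=\eta_{n+1}$ is a fixed point of $A$, the functional equation \eqref{FE} gives $B_n(\eta)=\eta^{-1/n}+\eta B_n(\eta)$. Hence
\begin{equation*}
B_n(\eta_{n+1})=\frac{\eta^{-1/n}}{1-\eta}=\frac{(n+1+\eta)^{1/n}}{1-\eta}.
\end{equation*}
Here we use $\eta^{-1}=n+1+\eta$. We also note the identity $\frac{1}{1-\eta}=1+\frac{1}{n+\eta}$, which follows from $\frac{\eta}{1-\eta}=\frac{1}{\eta^{-1}-1}$. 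On the other side, $\xi_n=\frac{n}{n^2+1}$ and $b^*=\frac{(n+1)^{1+1/n}}{n}$, so
\begin{equation*}
g(\xi_n)=\Big(n+\tfrac1n\Big)^{1/n}+\frac{(n+1)^{1+1/n}}{n^2+1}.
\end{equation*}
After dividing by $n^{1/n}$, the inequality to prove becomes
\begin{equation*}
\Big(1+\tfrac{1}{n^2}\Big)^{1/n}+\Big(1+\tfrac1n\Big)^{1/n}\frac{n+1}{n^2+1}\;>\;\Big(1+\tfrac{1+\eta}{n}\Big)^{1/n}\Big(1+\tfrac{1}{n+\eta}\Big).
\end{equation*}
We would first simplify the right-hand side using the bounds $\frac{1}{n+1+1/n}<\eta<\frac{1}{n+1}$. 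These let us replace $\eta$ by explicit rational functions of $n$ in the direction that enlarges the right-hand side.

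Next we would expand both sides in powers of $1/n$. Heuristically, each side equals $1+\frac1n+\frac1{n^2}+O(n^{-3})$. So the leading terms cancel up to order $n^{-2}$, and the inequality is decided at order $n^{-3}$ (possibly with $\log n$ factors coming from $(1+1/n)^{1/n}$). This cancellation is the main obstacle: crude estimates such as $(1+t)^{1/n}\le 1+t/n$ on the right and $(1+t)^{1/n}\ge 1$ on the left will not suffice. Instead we would use two-sided bounds with explicit second-order remainders:
\begin{equation*}
1+\tfrac{t}{n}-\tfrac{(n-1)t^2}{2n^2}\;\le\;(1+t)^{1/n}\;\le\;1+\tfrac{t}{n}, \qquad t\ge 0,
\end{equation*}
applying the lower bound on the left-hand side and the upper bound on the right-hand side. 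After multiplying out, the claim reduces to the positivity of an explicit rational function of $n$. We expect its numerator to be a polynomial with positive leading coefficient, which would give the inequality for all $n\ge n_0$ for some small explicit $n_0$.

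If the order-$n^{-3}$ terms turn out to cancel as well, we would keep one more term in the binomial bounds, or keep $(1+1/n)^{1/n}$ exact via the bound $\log(1+x)\ge x-x^2/2$. Finally, the finitely many remaining cases $2\le n<n_0$ would be checked by direct rigorous numerical evaluation of the two closed-form expressions above. This is the same kind of certified computation already used for $\sigma=1$ in the proof of Proposition \ref{Reg}.
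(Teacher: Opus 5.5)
\textbf{Verdict: there is a genuine gap, but it can be repaired.} Your route differs from the paper's. The paper checks $n\le 10$ numerically and then proves $w=\gamma-\beta>0$ on $(0,1/10]$ via the numerically certified contraction $F(x)=x-2w(x)/x^2$. Your asymptotic picture is correct: with $x=1/n$, the two sides of your normalized inequality are $1+x+x^2+x^3-\tfrac12x^4+O(x^5)$ and $1+x+x^2+\tfrac12x^3+\tfrac13x^4+O(x^5)$, so the margin is $\tfrac{1}{2n^3}$.

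\textbf{Where it fails.} The bounds you actually commit to cannot capture this margin, and the loss is on the right-hand side. For $t=(1+\eta)/n$ one has $(1+t)^{1/n}=1+x^2+\tfrac12x^3-\tfrac76x^4+O(x^5)$. Bernoulli's bound gives $1+t/n=1+x^2+x^3-x^4+O(x^5)$, so it overshoots by $\tfrac{1}{2n^3}+O(n^{-4})$, i.e.\ by the entire margin. Your lower bound on the left is already accurate to $O(n^{-5})$. Hence after ``applying the lower bound on the left-hand side and the upper bound on the right-hand side'', the difference of the bounds is $-\tfrac{3}{2n^4}+O(n^{-5})$. The rational inequality you would reduce to is therefore false for every large $n$; at $n=10$ it equals about $-1.4\cdot 10^{-4}$. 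No finite check can rescue it.

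This is precisely the crude right-hand estimate you yourself say ``will not suffice''; your two-sided bound has a second-order remainder only on the lower side. Your fallback also points the wrong way: $(1+1/n)^{1/n}$ sits on the left, where nothing is lost.

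\textbf{The repair.} You need a genuinely second-order \emph{upper} bound. For $0<\alpha<1$ and $t\ge 0$ the derivatives of $(1+t)^\alpha$ alternate in sign. So its Taylor polynomials $T_1$ and $T_3$ are upper bounds, while $T_2$ is a lower bound. In particular, ``keeping one more term'', i.e.\ $1+\alpha t-\frac{\alpha(1-\alpha)}{2}t^2$, is not an upper bound. You need
$(1+t)^\alpha\le 1+\alpha t-\frac{\alpha(1-\alpha)}{2}t^2+\frac{\alpha(1-\alpha)(2-\alpha)}{6}t^3$.
Take $\alpha=1/n$, $t=(1+\eta)/n$, together with your rational bracketing of $\eta$ (which costs only $O(n^{-5})$). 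This reproduces the right-hand side up to $O(n^{-5})$, and the difference of the bounds becomes $\frac{1}{2n^3}-\frac{5}{6n^4}+O(n^{-5})$. Your scheme then works: positivity of an explicit polynomial for $n\ge n_0$, plus a finite check below $n_0$.

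\textbf{What the repaired route buys.} It gives a fully analytic large-$n$ argument in place of the paper's numerically verified bound $F'\le 1/2$ on $[0,1/10]$. (Minor: no $\log n$ terms occur once $n^{1/n}$ has been divided out.)
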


\begin{proof}[Proof of Lemma \ref{1L}]
Since $B_{n}(x)\geq g(x) \ \ \forall x$  and $B_n(x) \geq g_n(x)  \  \forall  x\in [1/(n+1),1/n)$, we can  reduce the proof of Lemma \ref{1L} to the following two statements
\begin{eqnarray}\label{RL}
g(x)\geq g(\xi_{n}) & \forall x\in[\xi_n,1] \label{Rxi}\\
g_n(x)\geq g(\xi_{n}) & \forall x\in[\frac{1}{n+1},\xi_n]. \label{Lxi}
\end{eqnarray}
To prove \eqref{Rxi}, let us first note that $g$ is convex on $(0,1)$:
indeed $g''(x)=\frac{1}{n}\left(\frac{1}{n}+1 \right)x^{-\frac{1}{n}-2}>0$.
Therefore, since 
$g'(\xi_{n})=\frac{1}{n}\left( (n+1)^{1+\frac{1}{n}}-\left( (n+\frac{1}{n})\right)^{1+\frac{1}{n}}      \right)>0,$
    we immediately get our claim
    $$ g(x)\geq g(\xi_{n})+g'(\xi_{n})(x-\xi_n)\geq g(\xi_{n}) \ \ \  \forall x\geq \xi_{n}.$$

The proof of the second claim \eqref{Lxi} is slightly more tricky. In this case, since 
$(\frac{1}{n+1},\xi_{n}) \subset \left( \frac{1}{n+1},\frac{1}{n} \right)$, from \eqref{gk} we have $ g_n(x)\geq g(x)$  for all $x\in (\frac{1}{n+1},\xi_{n})$. On the other hand, since $g_{n}$ is convex we also know that $g_{n}(x)\geq g_{n}(\xi_{n})+g'_{n}(\xi_{n})(x-\xi_{n}).$
But 
\begin{align*}
g'_{n}(x)=-\frac{1}{n}x^{-\frac{1}{n}-1} +\left(\frac{1}{x}-n\right)^{-\frac{1}{n}}+\frac{1}{nx}\left(\frac{1}{x}-n \right)^{{-1/n-1}}-nb^{*}, \quad \text{ and}
\end{align*}
\begin{align*}
    {g_{n}'(\xi_{n})}&=-\left(1+\frac{1}{n^2} \right)\left(n+\frac{1} {n}\right)^{\frac{1}{n}}+n^{\frac{1}{n}}\left[1+n+\frac{1}{n}-(n+1)\left(1+\frac{1}{n}\right)^{\frac{1}{n}}\right]\\
    &=
-\left(1+\frac{1}{n^2} \right)\left(n+\frac{1} {n}\right)^{\frac{1}{n}}+n^{1+\frac{1}{n}}\left[1+\frac{1}{n}+\frac{1}{n^2}-\left(1+\frac{1}{n}\right)^{1+\frac{1}{n}}\right]  \\
&=
-\left(1+\frac{1}{n^2} \right)\left(n+\frac{1} {n}\right)^{\frac{1}{n}}+n^{1+\frac{1}{n}}\Psi\left( \frac{1}{n}\right),
\end{align*}
with 
$\Psi(x)=1+x+x^2-(1+x)^{1+x}.$ 

It is immediate to check that $\Psi(0)=\Psi'(0)=0$, moreover since $\Psi''(x) = 2 - (1+x)^{1+x}\left[(\ln(1+x)+1)^2 + \frac{1}{x+1}\right]$, and $\left[(\ln(1+x)+1)^2 + \frac{1}{x+1}\right]$  for $x>0$, we have $\Psi''(x)<0 \ \ 
 \forall x>0$, and therefore $\Psi(x)<0$ as soon as $x>0.$
    So we can conclude that $g_{n}'(\xi_{n})<-1$
    and by convexity inequality
    $ g_{n}(x)\geq g_{n}(\xi_{n})\geq g(\xi_n)$ for all $x\in\left[\frac{1}{n+1},\xi_{n} \right].$ 
\end{proof}

\begin{proof}[Proof of Lemma \ref{hard}] 
We want to prove the inequality
$g(\xi_{n})>B_{n}(\eta_{n+1})$ for $n\geq 2$; since both functions can be computed with arbitrary precision, on any finite set of integers we can easily check numerically that this inequality holds (in figure \ref{fig:numeric} we show the relation between $\delta_n:= g(\xi_{n})-B_{n}(\eta(n+1)) $ and $n$ for the first 800 integer values). 
\begin{figure}
    \centering
    \includegraphics[width=0.8\linewidth]{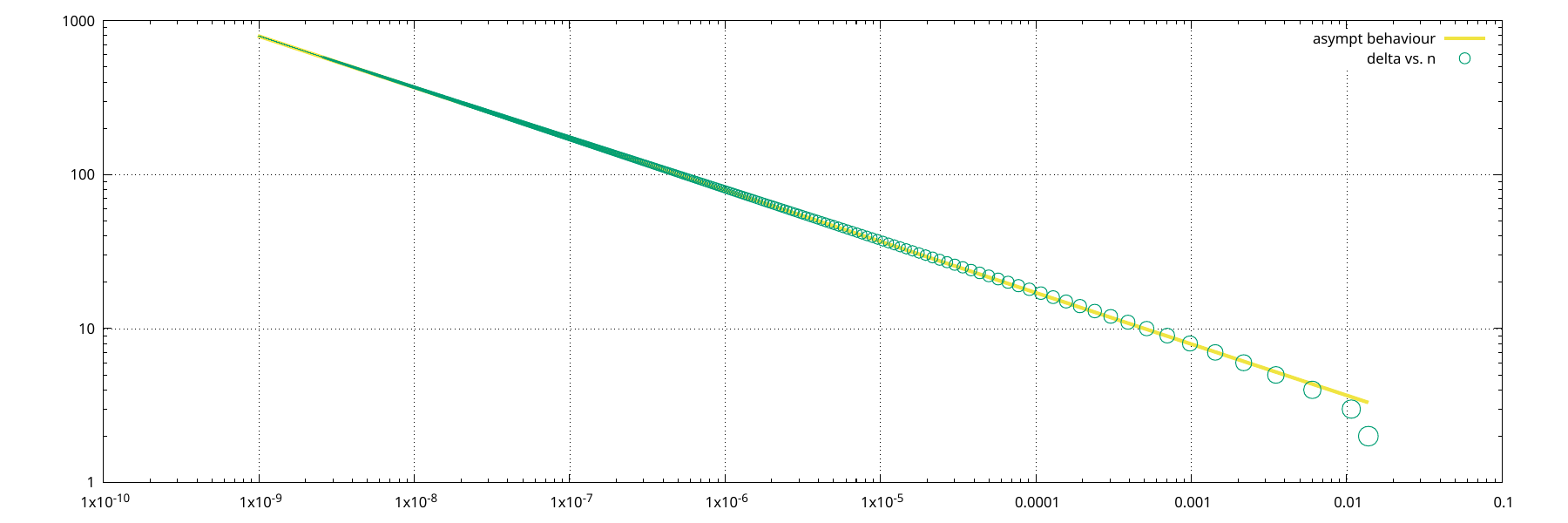}
    \caption{ The difference $\delta_n:= g(\xi_n)-B_n(\eta_{n+1})$ (on the $x$ axis) versus $n$ (on the $y$-axis) for $n\in [2,800]$. The graph is plotted in loglog scale, and the fact that for $n>>1$ these points appear to be aligned on a straight line of slope $-1/3$ derives from the asymptotic formula $\delta_n \sim \frac{1}{2}n^{-3}$ as $n\to +\infty$.}
    \label{fig:numeric}
\end{figure}

On the other hand, note that
\begin{align*}
   g(\xi_{n})&=\left( n+\frac{1}{n}\right)^{\frac{1}{n}}+\frac{b^*}{n+\frac{1}{n}}=n^{\frac{1}{n}} \left[   \left( 1+\frac{1}{n^2}\right)^{\frac{1}{n}}   +\frac{n+1}{n^2+1}\left(1+\frac{1}{n} \right)^{\frac{1}{n}}\right]\\
   &=n^{\frac{1}{n}}\gamma \left(\frac{1}{n}\right) \text{ where } \gamma \left(x\right):=(1+x^2)^x +\frac{x+x^2}{1+x^2}(1+x)^x.
   \end{align*}
On the other hand $\eta=\eta_{n+1}$ satisfies 
\begin{equation*}
\eta=\frac{1}{n+1+\eta} \quad \text{ and } \eta=\frac{\sqrt{(n+1)^2+4}-(n+1)}{2}.
\end{equation*}
By using functional equation \eqref{FE}, we have
$
B_{n}(\eta):=\frac{\eta^{-\frac{1}{n}}}{1-\eta}.$
Therefore,
\begin{align*}
B_{n}(\eta)&=(n+1+\eta)^{\frac{1}{n}}\frac{1}{1-\eta}==(n+1)^{\frac{1}{n}}\left( 1+\frac{\eta}{n+1}\right)^{\frac{1}{n}}\frac{1}{1-\eta}\\
&=n^{\frac{1}{n}}\left( 1+\frac{1}{n}\right)^{\frac{1}{n}} (1+\eta')^{\frac{1}{n}}\frac{1}{1-(n+1)\eta'},
\end{align*}
with
$\eta'=\frac{\eta}{n+1}=\frac{\sqrt{1+\frac{4}{(n+1)^2}}-1}{2}=\frac{\sqrt{1+\frac{\frac{4} {n^2}}{\left(1+\frac{1}{n}\right)^2}}-1}{2}.$
Therefore, we can write
$\eta'=r\left(\frac{1}{n} \right)$
with $r(x)=\frac{\sqrt{1+\frac{4x^2}{(1+x)^2}}-1}{2}$
and 
\begin{equation*}
    B_{n}(\eta)=n^{\frac{1}{n}}\beta\left(\frac{1}{n}\right) \text{ with } \beta(x)=(1+x)^x\cdot(1+r(x))^x \cdot \frac{x}{x-\left(1+{x}   \right)r(x)}.
\end{equation*}
Inequality \eqref{xieta} follows from the fact that it holds for $1\leq n \leq 10$ (by numerical evidence), and it also holds for $n>10 $ since
\begin{equation*}
    g(\xi_{n})-B_{n}(\eta)\geq n^{\frac{1}{n}}\left( \gamma \left( \frac{1}{n}\right)  -\beta \left( \frac{1}{n}\right)  \right)
\end{equation*}
and $w:=\gamma - \beta$ is positive on $(0,1/10)$;
indeed, this fact can be proved rigorously by a certified simple numerical scheme which we describe briefly. 

Let $F(x):=x-2w(x)/x^2$, since $w(x)\sim x^3/2$ as $x\to 0$ it is clear that $F$ extends to a $C^1$ function defined in a neighbourhood of the origin, such that $F'(0)=0$ and $F'(x)=1-2(x^{-2}w'(x)-2x^{-3}w(x))$. It is easy to check numerically that $F'(x)\leq 1/2$ for all $x\in J:=[0,1/10]$ i.e. $F:J \to J$ is a contraction, and the equation $F(x)=x$ has a unique solution on $J$, namely $x=0$. Since fixed points of $F$ are zeroes of $w$, this means that the only $x\in J$ where $w$  vanishes is $x=0$, hence it has constant (positive) sign on $(0, 1/10]$, thus $w(1/n)>0 $ also for all $n>10$.
\end{proof}

Note that $g$ depends smoothly on the parameter $\sigma$; to emphasize this dependence, we  denote it by
$g_{\sigma}$ in the following remark.
\begin{rem}\label{remU}
    There is a suitably small neighborhood $U_n$ of $n$ such that  \eqref{war} holds for $B_{\sigma}$ for all $\sigma \in U_{n}$ as well, namely 
    \begin{eqnarray*}
        \min_{x\in[\frac{1}{n},1]}B_{\sigma}(x)\geq g_{\sigma}(\xi_{n}),\\ g_{\sigma}(\xi_{n})>B_{\sigma}(\eta_{n+1}). 
    \end{eqnarray*}
\end{rem}

\section{Global minimum of $B_{\sigma}.$}\label{GMin}
As established in the previous section, a priori estimates allow us to narrow the domain of interest for the global minimum of $B_{\sigma}(x).$ Specifically, it suffices to analyze the function on the restricted interval $ [0, \frac{1}{n+1}]$, leading to  Proposition \ref{MT}.  

\subsection{Proof of Proposition \ref{MT}.}
Recall from the hypothesis of statement of Proposition \ref{MT} that we have 
 \begin{equation}\label{C2}
\sigma<n+\eta_{n+1}
\end{equation} 
 and
\begin{equation}\label{C1}
\min_{x\in[0,1]}B_{\sigma}(x)=B_{\sigma}(r) \text{ with }  r\leq \frac{1}{n+1}.
\end{equation}
Then  our aim is to show
$
r=\eta_{n+1}.$

Define $C:=\inf_{x\in[0,1]} B_{\sigma}(x).$ 
By Remark \ref{rem:LSC}, $B_{\sigma}$ is lower semi-continuous; thus the infimum is attained at some $r\in(0,1)$ such that 
$C=B_{\sigma}(r).$ Based on the a priori argument established previously, we know $r\in(0,\frac{1}{n+1}).$ This yields the following corollary.
\begin{cor}
$C=B_{\sigma}(r)$ for some $r\in(0,\frac{1}{n+1}).$ 
\end{cor}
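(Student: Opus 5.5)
The plan is to combine the existence of a minimizer, which comes from semicontinuity, with the localization hypothesis \eqref{C1}, and then to exclude the two endpoints of the interval because they are rational.

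\textbf{Existence and finiteness.} By Remark \ref{rem:LSC}, $B_\sigma$ is lower semi-continuous and $1$-periodic, so the infimum $C$ over $[0,1]$ is attained at some point $r$. This value is finite. Indeed, the fixed point $\eta_{n+1}$ has constant partial quotients, so the functional equation \eqref{FE} gives the closed form
$$B_\sigma(\eta_{n+1})=\frac{\eta_{n+1}^{-1/\sigma}}{1-\eta_{n+1}}<\infty,$$
and therefore $C\le B_\sigma(\eta_{n+1})<\infty$.

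\textbf{Localization.} Hypothesis \eqref{C1} places the minimizer in $[0,\frac{1}{n+1}]$. When $\sigma=n$ or $\sigma\in U_n$, this is exactly what Proposition \ref{Reg} and Remark \ref{remU} supply. It therefore remains only to upgrade the closed interval to the open interval $(0,\frac{1}{n+1})$.

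\textbf{Excluding the endpoints.} Both $0$ and $\frac{1}{n+1}$ are rational. The introduction records that $\lim_{x\to p/q}B_\sigma(x)=+\infty$. Hence, with $B_\sigma$ extended to rationals by lower semicontinuity, $B_\sigma(0)=B_\sigma(\frac{1}{n+1})=+\infty$. If instead $B_\sigma$ is regarded only on the irrationals, these points are simply not candidates for the minimizer. In either reading, since $C<\infty$, neither endpoint can be the minimizer. This gives $r\in(0,\frac{1}{n+1})$ and, in particular, shows that $r$ is irrational.

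\textbf{Main obstacle.} The only delicate point is this endpoint exclusion, specifically making precise the convention for the values of $B_\sigma$ at rationals. To justify $B_\sigma(p/q)=+\infty$ directly, I would first invoke the stated blow-up at rationals. Should that need support, I would use the lower bound $B_\sigma(x)\ge x^{-1/\sigma}$ near $0$, together with \eqref{gen}. Near $\frac{1}{n+1}$, the iterate $A(x)$ approaches $0$ or $1$, so the term $x\,A(x)^{-1/\sigma}$ diverges on one side. On the other side, the next iterate $A^2(x)$ tends to $0$ and $xA(x)\to\frac{1}{n+1}$, so the term $\beta_1(x)\,A^2(x)^{-1/\sigma}$ diverges.
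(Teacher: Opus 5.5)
Your proposal is correct and follows the same route as the paper: lower semicontinuity (Remark \ref{rem:LSC}) gives attainment of $C$, and the localization hypothesis $r\le \frac{1}{n+1}$ of Proposition \ref{MT} (supplied by the a priori estimates when $\sigma=n$) confines the minimizer. The paper passes to the open interval $(0,\frac{1}{n+1})$ without comment, so your explicit check that $C\le B_\sigma(\eta_{n+1})<\infty$ while $B_\sigma=+\infty$ at the rational endpoints $0$ and $\frac{1}{n+1}$ spells out a step the paper leaves implicit.
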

To complete the proof of Proposition \ref{MT}, we must show that minimizer $r$ is exactly the fixed point $\eta_{n+1}.$
We will establish that $r \geq \eta_{n+1}$ and $r \leq \eta_{n+1},$ separately. This is achieved through a series of propositions that provide iterative lower bounds and exact identities coming from the for the functional equation.

We begin by establishing a general lower bound for the minimum value $C$.
\begin{pro}\label{P3s}
Let $r\in(0,\frac{1}{n+1}).$ Then for all $K\in \N,$ we have 
\begin{equation*}
C=B_{\sigma}(r)\geq \frac{B^{(K)}_{\sigma}(r)}{1-\beta_{K}(r)}.
\end{equation*}
\end{pro}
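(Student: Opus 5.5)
The inequality follows from the iterated functional equation \eqref{gen} together with the fact that $C$ is the global minimum. Take $r$ to be the minimizer; it is irrational, since $B_\sigma$ blows up at every rational. Apply \eqref{gen} at $x=r$ with the given $K\in\N$:
\begin{equation*}
C=B_\sigma(r)=B^{(K)}_\sigma(r)+\beta_K(r)\,B_\sigma\bigl(A^{K+1}(r)\bigr).
\end{equation*}
The point $A^{K+1}(r)$ lies in $(0,1)\setminus\Q$, and $C=\inf_{[0,1]}B_\sigma$. Hence $B_\sigma(A^{K+1}(r))\geq C$. Since $\beta_K(r)>0$, we obtain
\begin{equation*}
C\geq B^{(K)}_\sigma(r)+\beta_K(r)\,C .
\end{equation*}

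Next we rearrange to $C\,(1-\beta_K(r))\geq B^{(K)}_\sigma(r)$ and divide by $1-\beta_K(r)$. This requires $1-\beta_K(r)>0$. It holds because $\beta_K(r)=\prod_{i=0}^K r_i$ with each $r_i\in(0,1)$ for irrational $r$, so $0<\beta_K(r)<1$. One can also note $\beta_K(r)\leq r<\frac{1}{n+1}\le\frac12$. Dividing gives the claimed bound $C\geq B^{(K)}_\sigma(r)/(1-\beta_K(r))$. Finiteness of $C$ is automatic, since for instance $B_\sigma(\eta_1)<\infty$.

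There is no real obstacle here. The only points needing care are that $r$ is irrational, so that \eqref{gen} applies, and the strict positivity of $1-\beta_K(r)$. The hypothesis $r<\frac1{n+1}$ is not actually needed for this inequality; it only matters for how the bound is used afterwards.
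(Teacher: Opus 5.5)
Your proposal is correct and follows the paper's proof: apply \eqref{gen} at the minimizer $r$, bound $B_\sigma(A^{K+1}(r))$ below by $C$, and rearrange. Your extra checks (irrationality of $r$, $0<\beta_K(r)<1$, finiteness of $C$) are details the paper leaves implicit.
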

\begin{proof}
By using \eqref{gen},  we have
\begin{align*}
C=B_{\sigma}(r)=B^{(K)}_{\sigma}(r)+\beta_{K}(r)B_{\sigma}(A^{K+1}(r)) &\geq  B_{\sigma}^{(K)}(r)+C\beta_{K}(r).
\end{align*}
Therefore $C\geq\frac{B^{(K)}_{\sigma}(r)}{1-\beta_{K}(r)}.$
\end{proof}
Next, we observe that for the fixed point $\eta_{n+1},$ this inequality becomes an exact identity.
\begin{pro}\label{P4s}
For all $K\in \N,$ we have 
\begin{equation*}
B_{\sigma}(\eta_{n+1})= \frac{B^{K}_{\sigma}(\eta_{n+1})}{1-\beta^{K}_{\sigma}(\eta_{n+1})}.
\end{equation*}
\end{pro}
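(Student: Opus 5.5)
The plan is to apply the general identity \eqref{gen} at the point $\eta=\eta_{n+1}$ and use that $\eta$ is a fixed point of the Gauss map. The notation $B^{K}_\sigma$ and $\beta^{K}_\sigma$ in the statement is read as the partial sum $B^{(K)}_\sigma$ and the product $\beta_K$.

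First, since $\eta=[0;\overline{n+1}]$ satisfies $\eta=\frac{1}{n+1+\eta}$ with $\eta\in(\frac{1}{n+2},\frac{1}{n+1})$, the branch formula for $A$ gives
$$A(\eta)=\frac{1}{\eta}-(n+1)=\eta.$$
By induction, $A^{j}(\eta)=\eta$ for all $j\ge 0$. Consequently $x_j=\eta$ and $\beta_K(\eta)=\eta^{K+1}\in(0,1)$. In particular $1-\beta_K(\eta)>0$, so the quotient in the statement is well defined.

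Second, $\eta$ is a quadratic irrational, so it has bounded partial quotients and $B_\sigma(\eta)<\infty$. This also follows directly from the closed form: the series \eqref{ps} at $\eta$ equals $\sum_{j\ge0}\eta^{j}\eta^{-1/\sigma}=\frac{\eta^{-1/\sigma}}{1-\eta}$, a convergent geometric series.

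Third, apply \eqref{gen} with $x=\eta$ and use $A^{K+1}(\eta)=\eta$:
$$B_\sigma(\eta)=B^{(K)}_\sigma(\eta)+\beta_K(\eta)\,B_\sigma(\eta).$$
Since $B_\sigma(\eta)$ is finite, we may subtract to get $(1-\beta_K(\eta))B_\sigma(\eta)=B^{(K)}_\sigma(\eta)$. Dividing by the positive quantity $1-\beta_K(\eta)$ yields the claim.

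As a cross-check, $B^{(K)}_\sigma(\eta)=\eta^{-1/\sigma}\frac{1-\eta^{K+1}}{1-\eta}$, and dividing by $1-\eta^{K+1}$ recovers the closed form $\frac{\eta^{-1/\sigma}}{1-\eta}$.

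There is no real obstacle here. The only points requiring care are justifying finiteness of $B_\sigma(\eta)$ before subtracting, and noting that $\beta_K(\eta)<1$.
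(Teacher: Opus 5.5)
Your proof is correct and follows the same route as the paper: apply \eqref{gen} at $\eta_{n+1}$, use $A^{K+1}(\eta_{n+1})=\eta_{n+1}$, and solve for $B_\sigma(\eta_{n+1})$. You also check that $B_\sigma(\eta_{n+1})<\infty$ and that $\beta_K(\eta_{n+1})=\eta_{n+1}^{K+1}<1$; the paper leaves these implicit, but they are what make the rearrangement legitimate.
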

\begin{proof}
Again by using \eqref{gen}, now for $r=\eta_{n+1}$ we obtain, 
\begin{align*}
B_{\sigma}(\eta_{n+1})=B^{K}_{\sigma}(\eta_{n+1})+\beta_{K}(\eta_{n+1})B_{2}(A^{K+1}(\eta_{n+1})),
\end{align*}
since $\eta$ is the fixed point of the Gauss map $A$ i.e. $A^{K+1}(\eta_{n+1})=A(\eta_{n+1})=\eta_{n+1}$ for any $K\in\N,$ therefore the required result follows.
\end{proof}

\begin{pro}
Let $r\in(0,\frac{1}{n+1})$ such that $C=B_{\sigma}(r).$ Then $\eta_{n+1}\leq r.$
\end{pro}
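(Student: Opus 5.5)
The plan is to argue by contradiction, using only the case $K=0$ of Propositions \ref{P3s} and \ref{P4s} together with a monotonicity property of an explicit one-variable function. This is exactly where hypothesis \eqref{C2}, $\sigma<n+\eta_{n+1}$, enters.

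First, for $K=0$ we have $B^{(0)}_\sigma(x)=x^{-1/\sigma}$ and $\beta_0(x)=x$. So Proposition \ref{P3s} gives
$$C=B_\sigma(r)\geq h(r),\qquad h(x):=\frac{x^{-1/\sigma}}{1-x},$$
while Proposition \ref{P4s} (or directly the functional equation \eqref{FE} at the fixed point) gives the identity $B_\sigma(\eta_{n+1})=h(\eta_{n+1})$. Since $C$ is the global minimum, we also have $C\le B_\sigma(\eta_{n+1})=h(\eta_{n+1})$. Combining the two,
$$h(r)\le C\le h(\eta_{n+1}).$$

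The key step is to show that $h$ is strictly decreasing on $(0,\eta_{n+1}]$. A logarithmic derivative gives
$$\frac{h'(x)}{h(x)}=-\frac{1}{\sigma x}+\frac{1}{1-x},$$
which is negative exactly when $\sigma x<1-x$, i.e.\ when $x<\frac{1}{\sigma+1}$. The fixed point satisfies $\eta_{n+1}=\frac{1}{n+1+\eta_{n+1}}$, and hypothesis \eqref{C2} gives $\sigma+1<n+1+\eta_{n+1}$. Hence
$$\frac{1}{\sigma+1}>\frac{1}{n+1+\eta_{n+1}}=\eta_{n+1},$$
so $h'<0$ on $(0,\eta_{n+1}]$. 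This is the only place where \eqref{C2} is used, and it is precisely what it is needed for. I expect this to be the main point to check carefully: the threshold $\frac{1}{\sigma+1}$ must lie strictly beyond $\eta_{n+1}$, and \eqref{C2} is exactly what guarantees this.

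To conclude, suppose $r<\eta_{n+1}$. Strict monotonicity of $h$ on $(0,\eta_{n+1}]$ gives $h(r)>h(\eta_{n+1})$. Then
$$C\ge h(r)>h(\eta_{n+1})=B_\sigma(\eta_{n+1})\ge C,$$
which is a contradiction. Hence $\eta_{n+1}\le r$.

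The hypothesis $r<\frac{1}{n+1}$ is not needed for this direction; it will matter only for the reverse inequality $r\le\eta_{n+1}$, where one has to use the digit structure of $r$ and a larger $K$.
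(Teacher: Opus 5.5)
Your proposal is correct and is essentially the paper's own argument: the $K=0$ case of the lower bound and of the fixed-point identity gives $h_\sigma(r)\le C\le h_\sigma(\eta_{n+1})$ with $h_\sigma(x)=x^{-1/\sigma}/(1-x)$. The hypothesis $\sigma<n+\eta_{n+1}$ places $\eta_{n+1}=1/(n+1+\eta_{n+1})$ inside the region $x<1/(\sigma+1)$ where $h_\sigma$ is strictly decreasing, so assuming $r<\eta_{n+1}$ yields the contradiction (and you are right that $r<\frac{1}{n+1}$ plays no role in this direction).
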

\begin{proof}
Let $h_{\sigma}(x):= \frac{x^{-1/\sigma}}{1-x}$; from Proposition \ref{P3s} and \ref{P4s} with $K=0$ and the definition of $C,$ we have
\begin{equation}\label{utile}
h_{\sigma}(\eta_{n+1})=\frac{B^{(0)}_{\sigma}(\eta_{n+1})}{1-\beta_{0}(\eta_{n+1})}=B_{\sigma}(\eta_{n+1})\geq C= B_{\sigma}(r)\geq \frac{B^{(0)}_{\sigma}(r)}{1-\beta_{0}(r)}= h_{\sigma}(r).
\end{equation}

Next $0<x< 1/(n+1),$ 
\begin{align*}
\frac{B^{(0)}_{\sigma}(x)}{1-\beta_{0}(x)}=\frac{x^{-1/\sigma}}{1-x}.
\end{align*}
 We have 
$h'_{\sigma}(x)=\frac{(1+\sigma)x-1}{\sigma(1-x)^2x^{1+1/\sigma}}.$ 
  Hence $h_{\sigma}$ is strictly decreasing for $x<1/(\sigma+1)$; Since by hypothesis $\sigma \leq n+\eta$ we deduce that  $1/(\sigma+1)\geq 1/(n+\eta_{n+1}+1)= \eta_{n+1}$, therefore $h_{\sigma}$ is decreasing on $[0,\eta_{n+1}] \subset [0,1/(\sigma+1)] .$  Suppose, for contradiction, that $r<\eta_{n+1}$. Then $r<\eta_{n+1}<1/(\sigma+1)$, so both $r$ and $\eta_{n+1}$ belong to the interval where $h_{\sigma}$ is decreasing. Consequently $h_{\sigma}(r)>h_{\sigma}(\eta_{n+1})$, contradicting $h_{\sigma}(\eta_{n+1})\ge h_{\sigma}(r)$ obtained in equation \ref{utile}. Therefore $r\ge\eta_{n+1}$.

\end{proof}

\begin{pro}
Let $r\in(0,\frac{1}{n+1})$ such that $C=B_{n}(r).$ Then $\eta_{n+1}\geq r.$
\end{pro}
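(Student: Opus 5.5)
The plan is to argue by contradiction, in the same spirit as the previous proposition, but now going one step deeper in the functional equation \eqref{FE} and using the monotonicity of $h_{\sigma}(x)=\frac{x^{-1/\sigma}}{1-x}$ on $(0,\eta_{n+1}]$ established there (valid since $\sigma=n<n+\eta_{n+1}$). Write $\eta=\eta_{n+1}$ and suppose $\eta<r\le \frac{1}{n+1}$.

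\textbf{Step 1: locating $r$ and $x_1=A(r)$.} Since $\eta=\frac{1}{n+1+\eta}>\frac{1}{n+2}$, the point $r$ lies in the branch $(\frac{1}{n+2},\frac{1}{n+1}]$, so $a_1(r)=n+1$ and $x_1:=A(r)=\frac1r-(n+1)$. As $t\mapsto \frac1t-(n+1)$ is decreasing and fixes $\eta$, we get $0\le x_1<\eta$. If $r=\frac{1}{n+1}$ then $r$ is rational and $B_n(r)=+\infty$, so $x_1\in(0,\eta)$ and $x_1$ is irrational.

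\textbf{Step 2: a lower bound for $C$.} By \eqref{FE}, $C=B_n(r)=r^{-1/n}+r\,B_n(x_1)$. Applying Proposition~\ref{P3s} with $K=0$ to the point $x_1$ (the argument there only uses $B_n\ge C$, not that $x_1$ is a minimiser) gives $B_n(x_1)\ge h_n(x_1)$. Hence
\[
C\ \ge\ \Phi(r):=r^{-1/n}+r\,h_n\!\left(\tfrac1r-(n+1)\right),
\]
with equality structure at the fixed point: $\Phi(\eta)=\eta^{-1/n}+\eta\,h_n(\eta)=h_n(\eta)=B_n(\eta)$.

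\textbf{Step 3: the key monotonicity.} I would show that $\Phi$ is strictly increasing on $[\eta,\frac{1}{n+1})$. Then $C\ge\Phi(r)>\Phi(\eta)=B_n(\eta)\ge C$, a contradiction, which forces $r\le\eta$.

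To prove the monotonicity, differentiate:
\[
\Phi'(r)=-\tfrac1n r^{-1-1/n}+h_n(x_1)-\tfrac1r\,h_n'(x_1),\qquad x_1=\tfrac1r-(n+1).
\]
The term $-\frac1r h_n'(x_1)$ is positive, because $x_1<\eta\le\frac{1}{n+1}$ puts $x_1$ in the region where $h_n$ is decreasing. Moreover, as $r$ increases, $x_1$ decreases towards $0$, so both $h_n(x_1)$ and $-h_n'(x_1)$ increase (like $x_1^{-1/n}$ and $x_1^{-1-1/n}$). Meanwhile the negative term $\frac1n r^{-1-1/n}$ decreases in $r$. It therefore suffices to check $\Phi'(\eta)\ge 0$.

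At $r=\eta$ the condition reads
\[
\tfrac1n\eta^{-1-1/n}\le h_n(\eta)-\tfrac1\eta h_n'(\eta).
\]
Using the explicit formula $h_n'(x)=\frac{(1+n)x-1}{n(1-x)^2x^{1+1/n}}$ and $\eta(n+1+\eta)=1$, this reduces to an elementary algebraic inequality in $\eta$ and $n$.

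\textbf{Expected obstacle.} The main difficulty is making this monotonicity argument rigorous. The verification at $r=\eta$ is fine, but one must justify that $\Phi'$ keeps its sign on the whole interval, not just at the left endpoint. Note that the earlier convexity test alone fails: the map $t\mapsto t^{-1/n}+t\,h_n(\eta)$ is decreasing at $\eta$ precisely because $\sigma<n+\eta$. So the gain must come from the growth of $h_n(x_1)$.

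If the monotonicity of the individual terms described above is not enough, my fallback is to prove the inequality directly by clearing denominators in $\Phi'(r)$ and bounding each factor using $x_1\le\eta\le\frac{1}{n+1}$ together with $r\le\frac1{n+1}$.
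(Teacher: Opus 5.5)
\textbf{Gap in Step 2.} The bound $B_n(x_1)\ge h_n(x_1)$ does not follow from the argument of Proposition~\ref{P3s}, and it is in fact false.

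That argument needs the point where it is applied to be a minimiser. It is the identity $B_\sigma(r)=C$ on the left-hand side that lets you solve $C\ge B^{(K)}_\sigma(r)+\beta_K(r)C$ for $C$. At an arbitrary point $x_1$ the same reasoning gives only $B_n(x_1)\ge x_1^{-1/n}+x_1C$. This is strictly weaker than what you need. Since $x_1<\eta$ and $h_n$ is decreasing on $(0,\eta]$, you have $h_n(x_1)>h_n(\eta)=B_n(\eta)\ge C$, hence $x_1^{-1/n}+x_1C<x_1^{-1/n}+x_1h_n(x_1)=h_n(x_1)$.

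A direct counterexample: for any $x\in(0,\eta)$ with $A(x)=\eta$, e.g.\ $x=1/(k+\eta)$ with $k\ge n+2$, one gets
\[
B_n(x)-h_n(x)=x\,\bigl(h_n(\eta)-h_n(x)\bigr)<0 .
\]
Numerically, for $n=2$ and $x=[0;4,\overline{3}]$ you get $B_2(x)\approx 2.680<h_2(x)\approx 2.702$. This $x$ is exactly $A(r)$ for $r=[0;3,4,\overline{3}]\in(\eta_3,\frac13)$, so it is precisely the situation of your Step 2. Consequently $C\ge\Phi(r)$ is unjustified: near $\eta$, $\Phi$ overshoots the lower bound one can actually prove. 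The contradiction in Step 3 therefore has no foundation, however the monotonicity of $\Phi$ turns out.

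\textbf{Repair.} Keep $C$ in the bound instead of replacing it by $h_n(x_1)$. Plug $B_n(x_1)\ge x_1^{-1/n}+x_1C$ into $C=r^{-1/n}+rB_n(x_1)$, solve for $C$, and use $1-rx_1=(n+1)r$:
\[
C\ \ge\ f(r):=\frac{r^{-1/n}+r\,x_1^{-1/n}}{1-r x_1}=\frac{1}{n+1}\Bigl[r^{-1-1/n}+\bigl(\tfrac1r-(n+1)\bigr)^{-1/n}\Bigr].
\]
This is Proposition~\ref{P3s} with $K=1$, and $f(\eta)=B_n(\eta)$ by Proposition~\ref{P4s}. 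It is exactly the paper's route.

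It also removes your ``expected obstacle''. Both summands are convex on $(\frac1{n+2},\frac1{n+1})$, and
\[
(n+1)f'(\eta)=\tfrac1n\eta^{-2-1/n}\bigl(\eta^{-1}-(n+1)\bigr)=\tfrac1n\eta^{-1-1/n}>0 .
\]
So $f$ is strictly increasing on $[\eta,\frac1{n+1})$, and $r>\eta$ would give $C\ge f(r)>f(\eta)=B_n(\eta)\ge C$. No term-by-term analysis of $\Phi'$ on the whole interval is needed.
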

\begin{proof}
We need to show $\eta_{n+1}\geq r.$ Let $f_\sigma(x):=\frac{B^{(1)}_{n}(x)}{1-\beta_{1}(x)}$; from Proposition \ref{P3s} and Proposition \ref{P4s} with $K=1$ and by the definition of $C,$ we have
\begin{align*}
f_\sigma(\eta_{n+1})=\frac{B^{(1)}_{\sigma}(\eta_{n+1})}{1-\beta_{1}(\eta_{n+1})}=B_{\sigma}(\eta_{n+1})\geq C=B_{\sigma}(r)\geq \frac{B^{(1)}_{\sigma}(r)}{1-\beta_{1}(r)}=f_\sigma(r).
\end{align*}
Note that for  $\frac{1}{n+2} <x<\frac{1}{n+1},$ $A(x)=\frac{1}{x}-(n+1).$
Therefore, 
\begin{align*}
f_{\sigma}(x)&=\frac{x^{-1/\sigma}+x(A(x))^{-1/\sigma}}{1-xA(x)}=     \frac{x^{-1/\sigma}+x(\frac{1}{x}-(n+1))^{-1/\sigma}}{(n+1)x}\\
&= \frac{1}{n+1}[x^{-1-1/\sigma}+(\frac{1}{x}-(n+1))^{-1/\sigma}].
\end{align*}
Both terms inside the parentheses are convex on $\left(\frac{1}{n+2},\frac{1}{n+1}\right)$, moreover
\begin{align*}
(n+1)f'_{\sigma}(x)=\frac{1}{\sigma}[x^{-2}(\frac{1}{x}-(n+1))^{-1-1/\sigma}-{(\sigma+1)}x^{-2-1/\sigma}].
\end{align*}

Next we show that 
$f_{\sigma}'(\eta_{n+1})> 0$ for $n\in\N.$
Recall that 
$\eta_{n+1}=\frac{1}{1+n+\eta_{n+1}}.$
\begin{align*}
(n+1)f_{\sigma}'(\eta_{n+1})&=
\frac{1}{\sigma}[\eta_{n+1}^{-2}(\frac{1}{\eta_{n+1}}-(n+1))^{-1-1/\sigma}-{(\sigma+1)}\eta_{n+1}^{-2-1/\sigma}]\\
&=\frac{1}{\sigma}\eta_{n+1}^{-2-1/\sigma}[\frac{1}{\eta}-(\sigma+1)]=\frac{1}{\sigma}\eta_{n+1}^{-2-1/\sigma}[(n
+1+\eta_{n+1})-{(\sigma+1})].
\end{align*}
The term 
$(n
+1+\eta_{n+1})-{(\sigma+1)}>0,$ since we assumed that  \eqref{C2} holds, so we can write $\frac{1}{\eta_{n+1}}={n+1+\eta_{n+1}}>{\sigma+1}.$  Therefore $f_{\sigma}'(\eta_{n+1}) > 0$. The condition \eqref{C1} (from Proposition \ref{MT}) ensures that the global minimizer $r$ is a priori localized within the interval $(0, \frac{1}{n+1}]$. Since $f_{\sigma}$ is strictly convex on $(\frac{1}{n+2}, \frac{1}{n+1}]$ and $f_{\sigma}'(\eta_{n+1}) > 0$, $f_{\sigma}$ is strictly increasing for all $x \in [\eta_{n+1}, \frac{1}{n+1}]$. Suppose, for contradiction, that $r > \eta_{n+1}$. Then $f_{\sigma}(r) > f_{\sigma}(\eta_{n+1})$, which contradicts our established bound $f_{\sigma}(\eta_{n+1}) \geq f_{\sigma}(r)$. Thus, we must have $r \leq \eta_{n+1}$.
\end{proof}

\subsection{Proof of Theorem \ref{MTV2}}
We must show that there exits a neighbourhood $U_{n}$ of $n$ such that for all $\sigma\in U_{n}$
\begin{equation*}
\min_{[0,1]}B_{\sigma}(x)=B_{\sigma}(\eta_{n+1}).
\end{equation*} 

For $\sigma=n,$ Theorem \ref{MTV} shows that the minimizer is unique and equals $\eta_{n+1}$. 
By Remark \ref{remU} there exists a nbd $U_n$ such that $g_\sigma(\xi_n)\leq \min_{x\in [1/(n+1),1]} B_\sigma(x)$ and $g_{\sigma}(\xi_{n})>B_{\sigma}(\eta_{n+1})$ for all $\sigma \in U_n$.
Since we can also assume that $U_n\subset (0,n+\eta_{n+1})$ for all $\sigma\in U_n$, Proposition \ref{MT} implies that the unique minimizer of $B_{\sigma}$ is $\eta_{n+1}$.

\section{Scaling properties of $B_{n}$ for $n\ge 2$.}\label{SPn}

Having identified the unique global minimum at $\eta_{n+1}$, we now investigate the local behavior of $B_{n}$ with near this point. We establish that for any integer $n \ge 2$, the function $B_{n}$ exhibits square-root scaling, which is a signature of the ``cusp-like" singularity typically found in Brjuno-type functions.

\begin{thm}
For each $n \ge 2$, there exists a constant $c_n > 0$ such that
\begin{equation*}
B_n(x) - B_n(\eta_{n+1}) \ge c_n |x - \eta_{n+1}|^{1/2} \qquad \forall x \in \bigl[0, \tfrac{1}{n+1}\bigr],
\end{equation*}
where $\eta_{n+1} = [0;\overline{n+1}] = \frac{\sqrt{(n+1)^2+4}-(n+1)}{2}$.
\end{thm}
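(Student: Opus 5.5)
We prove the square-root lower bound by iterating the two comparison inequalities used in the proof of Proposition \ref{MT}. Write $\eta=\eta_{n+1}$ and $C=B_n(\eta)$, which is the global minimum by Theorem \ref{MTV}. For every $x$ and every $K$, \eqref{gen} and $B_n\ge C$ give
\begin{equation*}
B_n(x)-C\;\ge\; B_n^{(K)}(x)+\beta_K(x)\,C-C \;=\;(1-\beta_K(x))\Bigl(\tfrac{B_n^{(K)}(x)}{1-\beta_K(x)}-C\Bigr).
\end{equation*}
At $\eta$ the function $\Phi_K(x):=B^{(K)}_n(x)/(1-\beta_K(x))$ equals $C$ for every $K$ (Proposition \ref{P4s}). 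It therefore suffices to find, for each $x\neq\eta$ in $[0,\frac1{n+1}]$, a depth $K$ such that $\Phi_K(x)-C\gtrsim|x-\eta|^{1/2}$. The factor $1-\beta_K(x)$ is at least $1-x\ge n/(n+1)$, so it does not matter.

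First, dispose of points far from $\eta$. The argument behind $r\ge\eta$ shows $h_n(x)>h_n(\eta)$ for $x<\eta$, and this is strict and uniform away from $\eta$. On $[\frac1{n+2},\frac1{n+1}]$, the argument behind $r\le\eta$ shows $f_n(x)>f_n(\eta)$ for $x>\eta$. Combined with lower semicontinuity, these give $B_n(x)-C\ge c>0$ outside any fixed neighbourhood of $\eta$, so the bound is trivial there once $c_n$ is small.

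Next, handle the neighbourhood of $\eta$ using cylinders. If $x$ shares the first $m$ partial quotients with $\eta$ (all equal to $n+1$) and differs at position $m+1$, then $|x-\eta|\asymp q_m^{-2}\asymp\lambda^{-2m}$, where $\lambda=1/\eta$. Also $\beta_{m-1}(x)\asymp\lambda^{-m}\asymp|x-\eta|^{1/2}$, and this relation is the source of the exponent $1/2$. Use the $K=m-1$ form of \eqref{gen}:
\begin{equation*}
B_n(x)=B_n^{(m-1)}(x)+\beta_{m-1}(x)\,B_n(y),\qquad y:=A^m(x).
\end{equation*}
Here $y$ has first partial quotient different from $n+1$, so $y$ lies a definite distance from $\eta$ in the sense of the previous step. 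Hence $B_n(y)\ge C+c$ with $c$ uniform, except in the borderline case where $y$ is close to $\frac1{n+1}$ or $\frac1{n+2}$; there we use that $B_n$ blows up at rationals, or pass one more level.

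Compare this with the same decomposition at $\eta$, namely $C=B_n^{(m-1)}(\eta)+\beta_{m-1}(\eta)C$. The difference splits as
\begin{equation*}
B_n(x)-C=\bigl[B_n^{(m-1)}(x)-B_n^{(m-1)}(\eta)\bigr]+C\bigl[\beta_{m-1}(x)-\beta_{m-1}(\eta)\bigr]+\beta_{m-1}(x)\bigl(B_n(y)-C\bigr).
\end{equation*}
The last term is at least $c\,\beta_{m-1}(x)\asymp|x-\eta|^{1/2}$. The first two terms are smooth functions of $x$ on the cylinder, so their size is governed by their derivatives. These derivatives behave like $\sum_{j<m}\beta_{j-1}\cdot(\text{derivative of }A^j)$, which is of order $\sum_j\lambda^{-j}\lambda^{2j}\asymp\lambda^{m}$. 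Multiplying by $|x-\eta|\asymp\lambda^{-2m}$ gives a total of $O(\lambda^{-m})$.

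This is the same order as the main term, and controlling it is the main obstacle. The plan is to show that the first two terms together are nonnegative, or at least $\ge -\frac{c}{2}\beta_{m-1}(x)$. To do this, rewrite them as $(1-\beta_{m-1}(x))\bigl(\Phi_{m-1}(x)-C\bigr)-\beta_{m-1}(x)\bigl(C-C\bigr)$ and exploit the convexity and monotonicity of $f_n$ and $h_n$ established in the proof of Proposition \ref{MT}, with the strict inequality coming from $\sigma=n<n+\eta$. If the sign argument fails, the fallback is a sharper constant: replace the crude bound $B_n(y)\ge C$ by the bound $B_n(y)\ge g_k(y)$ from \eqref{cylinder-bound}, which carries a margin over $C$ large enough to absorb the $O(\lambda^{-m})$ error.

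Finally, the constants are uniform in $m$ because the geometry is self-similar under $A$ near the fixed point. The bound then holds on the whole neighbourhood of $\eta$, and together with the far-field estimate it holds on all of $[0,\frac1{n+1}]$, which proves the theorem.
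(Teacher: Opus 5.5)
\paragraph{There is a genuine gap: the error term you flag as the main obstacle is never controlled.} Your set-up is fine. This includes the far-field step, $|x-\eta|\asymp\lambda^{-2m}$, $\beta_{m-1}(x)\asymp|x-\eta|^{1/2}$, and $B_n(y)-C\ge c$ on $[0,1]\setminus(\frac1{n+2},\frac1{n+1})$, which follows from lower semicontinuity and uniqueness of the minimiser. But controlling the first two terms is the whole content of the theorem.

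\emph{Your main plan (nonnegativity) is false.} Put $u_k:=A^{m-k}(x)$, so $u_m=x$ and $u_0=y$. Using $(1-\eta)C=\eta^{-1/n}$ and unrolling \eqref{FE}, one gets exactly
\[
\big[B_n^{(m-1)}(x)-B_n^{(m-1)}(\eta)\big]+C\big[\beta_{m-1}(x)-\beta_{m-1}(\eta)\big]=\sum_{k=1}^{m}\Big(\prod_{j=k+1}^{m}u_j\Big)(1-u_k)\big(h_n(u_k)-h_n(\eta)\big).
\]
For $m=1$ this is $(1-x)(h_n(x)-h_n(\eta))$. It is strictly negative on $(\eta,\frac1{n+1})$, because $h_n$ decreases there, and these are exactly the points with $y=A(x)<\eta$. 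For every $m$, the $k=1$ term has exact order $\beta_{m-1}(x)$ and the sign of $y-\eta$. Monotonicity of $h_n=\Phi_0$ and $f_n=\Phi_1$ says nothing about $\Phi_{m-1}$ for large $m$.

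\emph{Your fallback is only asserted.} You never compute the constant in your $O(\lambda^{-m})$, $g_k$ lives on a single cylinder, and the margins involved are tiny (order $n^{-3}$, cf.\ Lemma \ref{hard}). So ``large enough'' is exactly what remains unproved.

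\paragraph{The missing idea, which the paper uses, is to make this error one-signed rather than small.} Convexity of $t\mapsto t^{-1/n}$ (the paper's Taylor step) gives $u^{-1/n}-(1-u)C\ge -l(u-\eta)$. Here $l=\frac1n\eta^{-1-1/n}-C$, and $l>0$ is equivalent to $\eta<\frac1{n+1}$. Since $u_k-\eta=-\eta\,u_k(u_{k-1}-\eta)$, the linear errors sum to a geometric series, and
\[
B_n(x)-C\;\ge\;\beta_{m-1}(x)\Big[B_n(y)-C+\frac{l\eta\,(1-(-\eta)^m)}{1+\eta}\,(y-\eta)\Big].
\]
This loses nothing when $y>\eta$. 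When $y<\eta$, restart one level higher from $A^{m-1}(x)=\frac1{n+1+y}$. That point lies above $\eta$, in the compact set $[\frac{1}{n+1+1/(n+2)},\frac1{n+1}]$, which avoids $\eta$. Hence $B_n(x)-C\ge c\,\beta_{m-2}(x)\gtrsim|x-\eta|^{1/2}$, with no quantitative comparison of margins.

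The paper does the same thing step by step. Composing $\mathcal E_{k+1}\ge x_{k+1}\mathcal E_k-l\delta_{k+1}$ over two steps gives an error $-l(1-\eta)x_{k+2}\delta_{k+1}$, which is $\ge0$ for the right parity. The base points $x_1\in[0,\frac3{10}]$ all lie on one side of $\eta$. So only a positive lower bound for $\mathcal E_2$ on a compact set avoiding $\eta$ is needed.
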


The proof for general $n$ follows from a localization argument in the first cylinder. For the sake of clarity, we provide the detailed proof for the case $n=2$. The arguments for $n > 2$ are analogous, with the primary adjustment being the refinement of the initial interval $[0, x_1]$, where for general $n$, we take $x_1 = \frac{n+1}{(n+1)^2 + 1}$.

\subsection{Scaling properties of $B_{2}.$}\label{SP2}

We sketch the argument for $n=2$, the proof of general $n$ can be expressed on similar lines.
\begin{thm}
\begin{equation*}
B_{2}(x)-B_{2}(\eta_{3})\geq c|x-\eta_{3}|^{1/2}.\end{equation*}
\end{thm}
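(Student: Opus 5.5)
The idea is to compare $x$ with $\eta:=\eta_3$ through the deficit function $D(x):=B_2(x)-C\geq 0$, where $C=B_2(\eta)=\min B_2$ by Theorem \ref{MTV}. I will show two things: $D$ does not decrease when we pass from $x$ to $A^j(x)$ at the cost of a factor $\beta_{j-1}(x)$, and $D$ is bounded below by a constant once the orbit has moved a fixed distance away from $\eta$. The square root then comes from comparing $\beta_{m-1}(x)\asymp 1/q_m$ with $|x-\eta|\asymp 1/q_m^2$.

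\emph{Step 1 (one- and two-step descent).} From \eqref{FE}, for every $y$,
\begin{equation*}
D(y)=(1-y)\bigl(h_2(y)-h_2(\eta)\bigr)+yD(A(y)),
\end{equation*}
with $h_2(y)=y^{-1/2}/(1-y)$ as in the proof that $\eta_{n+1}\leq r$. Since $h_2$ is decreasing on $(0,1/3]$ and $\eta<1/3$, for $y\leq\eta$ the first term is nonnegative, so $D(y)\geq yD(A(y))$.

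For $y\in[\eta,1/3]$ I use \eqref{gen} with $K=1$ together with Proposition \ref{P4s}:
\begin{equation*}
D(y)=(1-\beta_1(y))\bigl(f_2(y)-f_2(\eta)\bigr)+\beta_1(y)D(A^2(y)),
\end{equation*}
where $f_2$ is the convex function from the proof that $\eta_{n+1}\geq r$. That proof shows $f_2$ is increasing on $[\eta,1/3]$, so here $D(y)\geq\beta_1(y)D(A^2(y))$.

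Chaining these two inequalities along the orbit gives $D(x)\geq\beta_{j-1}(x)D(A^j(x))$ along an increasing sequence of indices $j$, each step advancing by $1$ or $2$. This works as long as the current iterate stays in $[1/4,1/3]$, i.e. as long as the partial quotients of $x$ equal $3$.

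\emph{Step 2 (exit and lower bound away from $\eta$).} Fix a small $\delta>0$. The set $\{y\in[0,1]:|y-\eta|\geq\delta\}$ is compact, $B_2$ is lower semicontinuous, and $\eta$ is its unique minimizer by Theorem \ref{MTV}. Hence $D\geq c_0(\delta)>0$ on that set.

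Given $x\neq\eta$, let $m$ be the first index such that $|A^m(x)-\eta|\geq\delta$. Such an $m$ exists because $A$ is expanding near $\eta$ with derivative $-1/\eta^2$. Before time $m$ all partial quotients equal $3$ (for $\delta$ small), so Step 1 applies up to time $m$. The difficulty is that a two-step block may jump from $m-1$ to $m+1$. To handle it, I will enlarge the stopping rule: when the chain lands at $m-1$ with $A^{m-1}(x)\geq\eta$, I stop at $m-1$. Because $\beta_{m-2}\asymp\beta_{m-1}\asymp\beta_m$ with constants depending only on $\delta$, it suffices to get $D(A^{m-1}(x))\geq c_1>0$. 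This holds since $A^{m-1}(x)$ lies at distance at least $\delta\eta^2/2$ from $\eta$, so compactness again applies with a smaller $\delta$. In all cases
\begin{equation*}
D(x)\geq c\,\beta_{m-1}(x).
\end{equation*}

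\emph{Step 3 (comparison with the distance).} $x$ and $\eta$ share the first $m-1$ partial quotients (all equal to $3$), and bounded distortion of the branch of $A^{-(m-1)}$ gives $|x-\eta|\leq C'|A^{m-1}(x)-\eta|/q_{m-1}^2\leq C''/q_{m-1}^2$. On the other hand $\beta_{m-1}(x)\asymp 1/q_m\asymp 1/q_{m-1}$, because the relevant quotients are bounded near $\eta$. Hence $\beta_{m-1}(x)\gtrsim|x-\eta|^{1/2}$.

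Points $x\in[0,1/3]$ that are already at distance at least $\delta$ from $\eta$ are covered by Step 2 with $m=0$, since $|x-\eta|^{1/2}\leq 1$. This gives the estimate with a uniform constant $c$.

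The main obstacle is the bookkeeping in Step 2: the two kinds of blocks (length $1$ on the left of $\eta$, length $2$ on the right) must stop exactly at the exit time without losing positivity. The constants must also be uniform, which needs bounded distortion and the relation $\beta_{m-1}\asymp q_{m-1}^{-1}$ for orbits shadowing the fixed point $\eta$.
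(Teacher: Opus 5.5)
Your argument is correct and is essentially the paper's: the one-step bound $D(y)\ge y\,D(A(y))$ for $y\le\eta_3$ and the two-step bound $D(y)\ge\beta_1(y)\,D(A^2(y))$ for $y\in[\eta_3,1/3)$ are exactly the paper's relations $\mathcal E_{n+1}\ge x_{n+1}\mathcal E_n$ and $\mathcal E_{n+2}\ge x_{n+2}x_{n+1}\mathcal E_n$, iterated until the orbit is a fixed distance from $\eta_3$ and then compared with $|x-\eta_3|$, which shrinks like the square of the accumulated product. The differences are only in execution: you derive the two bounds from the monotonicity of $h_2$ and $f_2$ proved for Proposition~\ref{MT} instead of from the tangent-line estimate with the constant $l$, which also sidesteps the paper's sign computation for $x_{n+1}\delta_n+\delta_{n+1}$. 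You also run the orbit forward with a stopping time and the estimates $\beta_{m-1}\asymp q_{m-1}^{-1}$, $|x-\eta_3|\le\delta\, q_{m-1}^{-2}$, rather than pulling back the fundamental domain $[0,3/10]$ by $\Psi$.
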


\begin{proof}
Let $\Psi(x)=\frac{1}{3+x}$ and define the recursive relation 
\begin{equation}\label{RRR}
x_{n+1}=\Psi(x_{n})   \quad \text{ where } x_{1}\in (0,3/10].
\end{equation}
 
Then from \eqref{FE}, we have 
$
B_{2}(\Psi(x_{n}))=(\Psi(x_{n}))^{-1/2}+\Psi(x_{n})B_{2}(x_{n}) 
$ and 
$
B_{2}(\Psi(\eta_{3}))=(\Psi(\eta_{3}))^{-1/2}+\Psi(\eta_{3})B_{2}(\eta_{3})= \eta_{3}^{-1/2}+\eta_{3} B_{2}(\eta_{3}),
$
last equality follows from the fact that $\Psi(\eta_{3})=\eta_{3}.$ Now
\begin{align*}
B_{2}(x_{n+1})-B_{2}(\eta_{3})=x_{n+1}^{-1/2}-\eta_{3}^{-1/2}+x_{n+1}[B_{2}(x_{n})-B_{2}(\eta_{3})]+B_{2}(\eta_{3})[x_{n+1}-\eta_{3}].
\end{align*}

Let $h(x):=x^{-1/2}$, then By Taylor's expansion     
\begin{align*}x_{n+1}^{-1/2}-\eta_{3}^{-1/2}&\approx h'(\eta_{3}) (x_{n+1}-\eta_{3}) +\frac{h''(\eta_{3})}{2} (x_{n+1}-\eta_{3})\geq -\frac{1}{2}\eta_{3}^{-3/2}(x_{n+1}-\eta_{3}),
\end{align*}
as $\frac{h''(\eta_{3})}{2} (x_{n+1}-\eta_{3})>0.$

Setting $\mathcal E_{n}:=B_{2}(x_{n})-B_{2}(\eta_{3})$ and $\delta_{n}:=x_{n}-\eta_{3}$ with Sign$(\delta_{n})=(-1)^{n},$ we get 
\begin{equation*}
\mathcal E_{n+1}\geq x_{n+1}\mathcal E_{n}-l \delta_{n+1}
\end{equation*}
where $l:=\frac{1}{2}\eta_{3}^{-3/2}-B_{2}(\eta_{3})$ and as $  \frac{1}{2}\eta_{3}^{-3/2}    >  B_{2}(\eta_{3})$ therefore $l> 0.$

Iterating this relation we get 
$
\mathcal E_{n+1}\geq x_{n+1}x_{n}\mathcal E_{n-1}-l (x_{n+1}\delta_{n}+\delta_{n+1}).
$
Note that 
\begin{align*}
x_{n+1}\delta_{n}+\delta_{n+1} &=x_{n+1}(x_{n}-\eta_{3})+x_{n+1}-\eta_{3}   \\
&=x_{n+1}(1+x_{n})-\eta_{3}(1+x_{n+1}),\\
&= x_{n+1}(3+x_{n})-\eta_{3}(3+x_{n+1})-2(x_{n+1}-\eta_{3})=(3+x_{n})\delta_{n+1}-2\delta_{n} .
\end{align*}
The term  $ (3+x_{n})\delta_{n+1}-2\delta_{n}         $ has sign $(-1)^{n}.$
Therefore, for even values of $n$ we have 
$
\mathcal E_{n+2}\geq x_{n+2}x_{n+1}\mathcal E_{n}.
$

Let $\bigcup_{n=0}^{\infty}\Psi^{n}([0,\frac{3}{10}])=[0,\frac{3}{10}]\setminus \eta_{3}.$
For all $n\ge 1,$ 
$
\mathcal E_{2n+2}\geq \lambda_{n} \mathcal E_{2n},$
where 
$\lambda_{n}:=x_{2n+2}x_{2n+1}.$
Note that from \eqref{RRR}, we have  $x_{2}=\frac{1}{3+x_{2}},x_{3}=\frac{3+x_{1}}{10+3x_{0}},x_{4}=\frac{10+3x_{1}}{33+10x_{1}}, \cdots.$ Continuing in this way \eqref{RRR} induces the recursive relation
\begin{equation*}
x_{n}=\frac{G_{n-1}+x_{1}H_{n-1}}{G_{n}+x_{1}H_{n}}, \quad \forall {n\geq 2} \text{ and } x_{1}\in\left[0,\frac{3}{10}\right],
\end{equation*}
where $G_{n+1}=3G_{n}+
G_{n-1}$ are with $G_{1}=1$ and $G_{2}=3$ and 
$H_{n+1}=3H_{n}+
H_{n-1}$ are with $H_{1}=0$ and $H_{2}=1.$

 Therefore, we can write
$
\lambda_{n}=\frac{G_{2n}+x_{1}H_{2n}}{G_{2n+2}+x_{1}H_{2n+2}}.
$ Further
for all $n\ge1$
$
\frac{G_{2n}}{G_{2n+2}}\leq \lambda_{n} \leq \frac{G_{2n+2}}{G_{2n+4}},$
 and $\lambda_{n}\to \eta^2_{3}$ as $n\to\infty$ (exponentially).

In order to obtain the optimal estimate first note that
\begin{align*}
\mathcal E_{2n+2}&=\big( \prod^n_{k=1}\lambda_{k} \big) \mathcal E_{2} \text{ with} 
\prod^n_{k=1}\lambda_{k} =\exp (\sum^{n}_{k=1}\log \lambda_{k})=\exp (n\log \eta_{3}^{2}+\sum^{n}_{k=1}\log \frac{\lambda_{k}}{\eta_{3}^2})=\eta_{3}^{2n}\exp (\sum^{n}_{k=1}\log \frac{\lambda_{k}}{\eta_{3}^2}).
\end{align*}
The term $\sum^{\infty}_{k=1}\log \frac{\lambda_{k}}{\eta_{3}^2}$ in last equation is an absolutely convergent series because $\lambda_{k} \to \eta_{3}^{2}$ act an exponential rate,
therefore we can write
\begin{align*}
\mathcal E _{2n+2}\geq \sigma \mathcal E _{2} \eta_{3}^{2n} \quad \text{ where }\quad \sigma:= \exp (-\sum^{\infty}_{n=1}\log ( \frac{G_{2n+2}}{G_{2n}})).
\end{align*}
Thus, we can write
\begin{equation} \label{EstB}
\mathcal E_{2n+2}\geq c^* \eta_{3}^{2n} \text{ for some constant } c^* .
\end{equation}
This estimate shows that $B_{2}$ has a cusp like minima at $\eta_{3},$ namely 
\begin{equation*}
B_{2}(x)-B_{2}(\eta_{3})\geq c|x-\eta_{3}|^{\tau}        \quad \text{ with }
\end{equation*}
 $\tau=\frac{1}{2}$
 where the value of $\tau$ is obtained by using the fact that 
$\delta_{2n+2}\asymp \eta_{3}^{4n+4}$ for any $n\geq1$ and comparing it with the estimates \eqref{EstB}.
\end{proof}

\begin{figure}[h]
 \centering
  \includegraphics{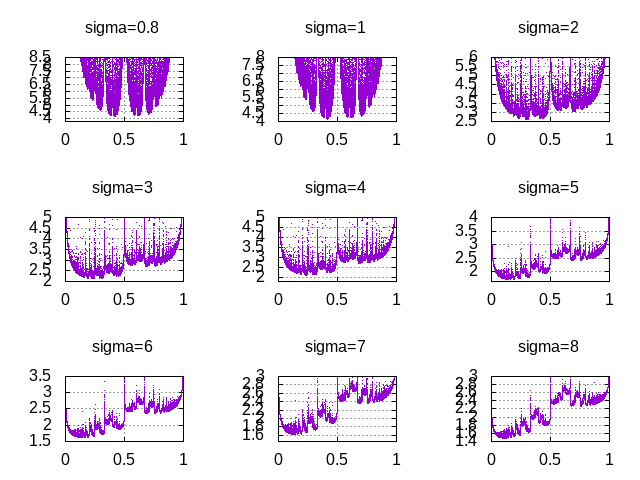}\label{mfig}
 \caption{ Graphs of $B_{\sigma}$ for different values of $\sigma$.}   \end{figure}

%
%

\begin{thebibliography}{10}

\bibitem{BCM}
A.~Bakhtawar, C.~Carminati, and S.~Marmi.
\newblock Global and local minima of {$\alpha$}-{B}rjuno functions.
\newblock {\em Monatsh. Math.}, 207(2):197--230, 2025.

\bibitem{BaMa_12}
M.~Balazard and B.~Martin.
\newblock Comportement local moyen de la fonction de {B}rjuno.
\newblock {\em Fund. Math.}, 218(3):193--224, 2012.

\bibitem{BaMa_20}
M.~Balazard and B.~Martin.
\newblock Sur le minimum de la fonction de {B}rjuno.
\newblock {\em Math. Z.}, 296(3-4):1819--1824, 2020.

\bibitem{Br_71}
A.~D. Brjuno.
\newblock Analytic form of differential equations. {I}, {II}.
\newblock {\em Trudy Moskov. Mat. Ob\v{s}\v{c}.}, 25:119--262; ibid. {\bf 26
  (1972), 199--239}. 
  {377192}, 

\bibitem{BC06}
X.~Buff and A.~Ch\'eritat.
\newblock The {B}rjuno function continuously estimates the size of quadratic
  {S}iegel disks.
\newblock {\em Ann. of Math. (2)}, 164(1):265--312, 2006.

\bibitem{ChMa_22}
C.~Chavaudret and S.~Marmi.
\newblock Analytic linearization of a generalization of the semi-standard map:
  radius of convergence and {B}rjuno sum.
\newblock {\em Discrete Contin. Dyn. Syst.}, 42(7):3077--3101, 2022.

\bibitem{Da_94}
A.~M. Davie.
\newblock The critical function for the semistandard map.
\newblock {\em Nonlinearity}, 7(1):219--229, 1994.

\bibitem{StMa_18}
S.~Jaffard and B.~Martin.
\newblock Multifractal analysis of the {B}rjuno function.
\newblock {\em Invent. Math.}, 212(1):109--132, 2018.

\bibitem{LuMar_10}
L.~Luzzi, S.~Marmi, H.~Nakada, and R.~Natsui.
\newblock Generalized {B}rjuno functions associated to {$\alpha$}-continued
  fractions.
\newblock {\em J. Approx. Theory}, 162(1), 2010.

\bibitem{MA_00}
S.~Marmi.
\newblock An introduction to small divisors.
\newblock {\em Quaderni del Dottorato di Ricerca in Matematica, Pisa (2000)
  (also available at mp-arc and front.math.ucdavis.edu: math.DS/0009232)}.

\bibitem{Mar_90}
S.~Marmi.
\newblock Critical functions for complex analytic maps.
\newblock {\em J. Phys. A}, 23(15):3447--3474, 1990.

\bibitem{MaCa08}
S.~Marmi and C.~Carminati.
\newblock Linearization of germs: regular dependence on the multiplier.
\newblock {\em Bull. Soc. Math. France}, 136(4):533--564, 2008.

\bibitem{MaMoYo_01}
S.~Marmi, P.~Moussa, and J.-C. Yoccoz.
\newblock Complex {B}rjuno functions.
\newblock {\em J. Amer. Math. Soc.}, 14(4):783--841, 2001.

\bibitem{MaMoYo_06}
S.~Marmi, P.~Moussa, and J.-C. Yoccoz.
\newblock Some properties of real and complex {B}rjuno functions.
\newblock In {\em Frontiers in number theory, physics, and geometry. {I}},
  pages 601--623. Springer, Berlin, 2006.

\bibitem{MaJa_92}
S.~Marmi and J.~Stark.
\newblock On the standard map critical function.
\newblock {\em Nonlinearity}, 5(3):743--761, 1992.

\bibitem{MaYa_02}
S.~Marmi and J.-C. Yoccoz.
\newblock Some open problems related to small divisors.
\newblock In {\em Dynamical systems and small divisors ({C}etraro, 1998)},
  volume 1784 of {\em Lecture Notes in Math.}, pages 175--191. Springer,
  Berlin, 2002.

\bibitem{Si_42}
C.~L. Siegel.
\newblock Iteration of analytic functions.
\newblock {\em Ann. of Math. (2)}, 43:607--612, 1942.

\bibitem{Yo_95}
J.-C. Yoccoz.
\newblock Th\'{e}or\`eme de {S}iegel, nombres de {B}runo et polyn\^{o}mes
  quadratiques.
\newblock Number 231, pages 3--88. 1995.
\newblock Petits diviseurs en dimension $1$.

\bibitem{Yo_02}
J.-C. Yoccoz.
\newblock Analytic linearization of circle diffeomorphisms.
\newblock In {\em Dynamical systems and small divisors ({C}etraro, 1998)},
  volume 1784 of {\em Lecture Notes in Math.}, pages 125--173. Springer,
  Berlin, 2002.

\end{thebibliography}

\Addresses

\end{document}